\newtheorem{theorem}{Theorem}[section]
\newtheorem{lemma}[theorem]{Lemma}
\newtheorem{proposition}[theorem]{Proposition}
\newtheorem{corollary}[theorem]{Corollary}
\theoremstyle{definition}
\theoremstyle{definitions}
\newtheorem{definition}[theorem]{Definition}
\newtheorem{remark}[theorem]{Remark}
\newtheorem{example}[theorem]{Example}
\theoremstyle{notations}
\theoremstyle{remarks}
\journal{}
\begin{document}

\begin{frontmatter}



\title{Topologized standard construction and locally quasinormal subgroups }


\author[]{Z. Pashaei \corref{cor1}}
\ead{szpashaei1986@yahoo.com}
\author[]{N. Gorentas}
\ead{ngortas@yahoo.com }
\author[]{R. Abdi}
\ead{a.roghayeh86@gmail.com}

\address{Department of Mathematics, Faculty of Sciences, Van Yuzuncu Yil University, Van,\\
Turkey, 65080-Campus,\\}
\cortext[cor1]{Corresponding author}
\begin{abstract}
This paper is the extended version of some results in \cite{SPasha, SSPasha}. Let $H\leq \pi_{1}(X,x_{0})$. The first part of the paper is devoted to studying weaker conditions under which homotopically Hausdorff relative to H becomes homotopically path Hausdorff relative to H. By using of these conditions, we explore the connection between whisker and quotient topologies on fundamental group. After that, we address the coincidence of two determined topologies on the standard construction $\widetilde{X}_{H}$ when H is a locally quasinormal subgroup. Finally, Example \ref{EX} illustrates that these kinds of subgroups are more extensive than normal subgroups and justifies the generalization of these results.
\end{abstract}

\begin{keyword}
Homotopically Hausdorff\sep Homotopically path Hausdorff\sep Strong small loop transfer spaces \sep Qusitopological fundamental group \sep Whisker topology \sep Lasso topology \sep Covering map.
\MSC[2020]{57M10, 57M12,  57M05, 55Q05.}

\end{keyword}

\end{frontmatter}



\section{Introduction}

In the classical covering theory, semilocally simply connectivity is a crucial condition. Indeed, when $X$ is Peano semilocally simply connected, connected covering spaces of $X$ correspond with the conjugacy classes of all subgroups of $\pi_{1}(X,x)$. Accordingly, it is possible there are many subgroups of $\pi_{1}(X,x)$  which are not correspondence to a covering map but which are correspondence to other generalizations of covering map \cite{Brazss, Braz}. There is a natural topology on fundamental group,  $\pi_{1}^{qtop}(X,x)$, which plays important role in the existence of covering spaces: $X$ admits a universal covering space if and only if $\pi_{1}^{qtop}(X,x)$ is discrete. So, the entire subgroups of  $\pi_{1}(X,x)$ are covering subgroup if $\pi_{1}^{qtop}(X,x)$ is discrete .

If $X$ is a non-semilocally simply connected space, such as 1-dimensional Menger universal cure, the Hawaiian Earring and other complicated local spaces, we do not have simply connected covering. Accordingly, one is led to generalize the concept of universal covering space including such spaces. Considering those properties of covering spaces which are essential is a joint approach. One of these generalizations is semicoverings. Semicoverings are in connection with topological group structures on fundamental groups \cite{BrazT, Brazss}. Next approach, named generalized covering map, are expressed only on the basis of unique lifting properties and need not to be a semicovering map \cite{ZastrowC}. Besides what is said, the topological properties of covering, semicovering, and generalized covering subgroups of $\pi_{1}^{qtop}(X,x)$ have been studied in \cite{Brazss, BrazO, TorS}. The existence of universal connected covering space of $X$ makes the coincidence of these three concepts occur. In \cite{BrodT}, Brodskiy et al. have studied whisker topology on fundamental group for the first time, $\pi_{1}^{wh}(X,x)$, and they have shown that $\pi_{1}^{wh}(X,x)$ does not depend on the choice of point $x\in{X}$ in case $X$ is small loop transfer (SLT for short) space. A few results of the paper \cite{BrodT} are relevant to strong SLT spaces which are stronger version of SLT spaces. The authors in \cite{SPasha, SSPasha} have introduced spaces are more extensive than SLT and strong SLT spaces. One of advantages of these new approaches is in relation to the vastness of them and their generalizations. Example 2.16 of \cite{SSPasha} shows that (strong) SLT spaces are wider than semilocally simply connected and small loop spaces. Moreover, (strong) SLT spaces and their generalizations have a number of other advantages over semilocally simply connected and small loop approaches. Let us recall some of these basic results which have been recently obtained by researchers.

\begin{itemize}
 \item A Peano topological space $X$ is SLT iff $\widetilde{X}^{wh}_{e}=\widetilde{X}^{top}_{e}$. The extended version of it is Theorem 3.2 of \cite{SPasha}; moreover, we can verify that $\pi_{1}^{qtop}(X,x_{0})$ is topological group whenever $X$ is SLT at $x_{0}$.
\item A path connected space $X$ is strong SLT iff $\widetilde{X}^{wh}_{e}=\widetilde{X}^{l}_{e}$. The extended version of it is Theorem 4.2 of \cite{SSPasha}.
\item An equivalent condition for the discreteness of $\pi_{1}^{wh}(X,x_{0})$ is that $X$ be semilocally simply connected at $x_{0}$.
\item  Letting $X$ be an SLT space, the concepts of h.H and h.p.H are equivalent; its relative version does also hold. 
\item Each generalized covering subgroup of $\pi_{1}(X,x_{0})$, e.g. H, is semicovering subgroup when $X$ is an H-SLT space at $x_{0}$.
\item Each semicovering subgroup of $\pi_{1}(X,x_{0})$, e.g. H, is covering subgroup when $X$ is a strong H-SLT space at $x_{0}$.
\end{itemize}

Note that the property of locally quasinormal subgroup, defined in \cite{FG} (see Definition \ref{De1}), led us to improve some results of the articles \cite{SPasha, SSPasha}. At the begining of section 3, our purpose is unifying two important concepts as mentioned above. By using of this coincidence, we get information about the connection between whisker and quotient topologies on fundamental group. However, we use new conditions to expand Theorem 4.2 of \cite{SSPasha}. Finally, Example \ref{EX} illustrates that locally quasinormal subgroups are more extensive than normal subgroups which are one of the requirements of some theorems of the articles \cite{SPasha, SSPasha}.


\section{Definitions and terminologies}

Throughout the paper $(X,x_{0})$ will denote a pointed path-connected space and H will denote a subgroup of fundamental group $\pi_{1}(X,x_{0})$. However, we call $X$ is Peano when it is connected and locally path connected. For a given path $\alpha:[0,1]\rightarrow X$, $\bar{\alpha}(t)=\alpha(1-t)$ is the reverse path. Let $\gamma$ and $\delta$ be paths in $X$. The concatenation of $\gamma$  and $\delta$ is denoted by $\gamma \ast \delta$ in which $\gamma(1)=\delta(0)$. Moreover, we denote constant path sending the unit interval set $[0,1]$ to $x$ by $c_{x}$. For given $x\in{X}$, $P(X,x)$ denotes the subspace of paths whose starting point is $x$ and $\Omega(X,x)$ denotes the subspace of paths whose the initial and final points are equal to $x$. Letting $\alpha\in{P(X,x_{0})}$, we denote path-conjugate subgroup $[\bar{\alpha}H \alpha]=\lbrace [\bar{\alpha}\ast \delta \ast \alpha] \ \vert \ [\delta]\in{H} \rbrace$ of $ \pi_{1}(X,x)$ by $H_{\alpha}$. The map $f_{\#}: \pi_{1}(X,x_{0}) \rightarrow \pi_{1}(Y,y_{0})$ denotes the homomorphism induced by continuous function $f:(X,x_{0})\rightarrow (Y,y_{0})$. The subgroup $\pi(\mathcal{U},x_{0})$ of $\pi_{1}(X,x_{0})$, named Spanier subgroup, is generated by elements having the forms as $\alpha \ast \beta \ast \bar{\alpha}$, where $\beta\in{\Omega(X,\alpha(1))}$ in which Im($\beta$) is contained in some elements of $\mathcal{U}$. In the following theorem, Spanier has shown that Spanier subgroups help us to determine when a map is covering. Note that H is called a covering subgroup if $X$ has a covering map such as $p:\widetilde{X}\rightarrow X$ with $p(\tilde{x}_{0})=x_{0}$ so that $p_{\#}\pi_{1}(\widetilde{X}, \tilde{x}_{0})$ and $H$ are equal.


\begin{theorem}\cite[Theorem 2.5.13]{SPan}\label{Th}
Given a Peano space $X$, $H\leq \pi_{1}(X,x_{0})$ is a covering subgroup iff there exists an open cover $\mathcal{U}$ of $X$ such that $\pi(\mathcal{U},x_{0})\leq H$.
\end{theorem}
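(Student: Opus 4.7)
The plan is to prove both directions by invoking the standard covering space construction, which is available under the Peano hypothesis.

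First, for the forward direction, assume $p:\widetilde{X}\to X$ is a covering map with $p_{\#}\pi_{1}(\widetilde{X},\widetilde{x}_{0})=H$. By definition each $x\in X$ admits an evenly covered open neighborhood $U_{x}$; I collect these into an open cover $\mathcal{U}=\{U_{x}\}_{x\in X}$. It then suffices to verify that every generator $\alpha\ast\beta\ast\bar{\alpha}$ of $\pi(\mathcal{U},x_{0})$ lies in $H$. Here $\beta$ is a loop whose image is contained in some $U_{x}\in\mathcal{U}$; by unique path-lifting, and because loops inside an evenly covered neighborhood lift to loops, $\beta$ lifts to a loop $\widetilde{\beta}$ based at the endpoint of any lift $\widetilde{\alpha}$ of $\alpha$. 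Consequently $\widetilde{\alpha}\ast\widetilde{\beta}\ast\overline{\widetilde{\alpha}}$ is a loop at $\widetilde{x}_{0}$, giving $[\alpha\ast\beta\ast\bar{\alpha}]\in p_{\#}\pi_{1}(\widetilde{X},\widetilde{x}_{0})=H$.

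For the reverse direction, I would use the standard construction: let $\widetilde{X}_{H}$ be the set of equivalence classes $[\alpha]_{H}$ of paths $\alpha\in P(X,x_{0})$, where $\alpha\sim_{H}\beta$ iff $\alpha(1)=\beta(1)$ and $[\alpha\ast\bar{\beta}]\in H$, and define $p:\widetilde{X}_{H}\to X$ by $p([\alpha]_{H})=\alpha(1)$. Equip $\widetilde{X}_{H}$ with the topology generated by the basic open sets $\langle[\alpha]_{H},U\rangle=\{[\alpha\ast\gamma]_{H} : \gamma \text{ is a path in } U \text{ with } \gamma(0)=\alpha(1)\}$, as $U$ ranges over open subsets of $X$. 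Checking that $p$ is continuous and surjective is straightforward, and the identity $p_{\#}\pi_{1}(\widetilde{X}_{H},[c_{x_{0}}]_{H})=H$ follows by lifting any loop $\alpha\in\Omega(X,x_{0})$ to $t\mapsto[\alpha_{t}]_{H}$ with $\alpha_{t}(s)=\alpha(ts)$, and observing that the terminal point of this lift equals $[c_{x_{0}}]_{H}$ precisely when $[\alpha]\in H$.

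The central obstacle, and the only place where the hypothesis $\pi(\mathcal{U},x_{0})\leq H$ is essentially used, is showing that $p$ is a covering map. Given $x\in X$, I would choose $U\in\mathcal{U}$ containing $x$ and, exploiting local path connectedness of the Peano space $X$, shrink $U$ to a path-connected open neighborhood $V\subseteq U$ of $x$. Then $p^{-1}(V)=\bigcup\langle[\alpha]_{H},V\rangle$, where $\alpha$ ranges over paths ending in $V$. I must verify that each slice $\langle[\alpha]_{H},V\rangle$ maps bijectively (hence homeomorphically, by the construction of the topology) onto $V$, and that two slices are either equal or disjoint. Both assertions reduce to the claim: if $\gamma_{1},\gamma_{2}$ are paths in $V$ from $\alpha(1)$ to a common endpoint, then $[\alpha\ast\gamma_{1}]_{H}=[\alpha\ast\gamma_{2}]_{H}$, that is, $[\alpha\ast(\gamma_{1}\ast\overline{\gamma_{2}})\ast\bar{\alpha}]\in H$. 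But $\gamma_{1}\ast\overline{\gamma_{2}}$ is a loop in $V\subseteq U\in\mathcal{U}$, so this element lies in $\pi(\mathcal{U},x_{0})\leq H$ by hypothesis. This is the delicate point, and it is precisely why the Peano assumption is needed: to guarantee that $\mathcal{U}$ can be refined by path-connected neighborhoods, without which the slices would fail to surject injectively onto $V$.
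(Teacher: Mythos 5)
Your argument is correct and is exactly the classical Spanier construction that the paper invokes by citation (the paper gives no proof of its own for this theorem, deferring to \cite[Theorem 2.5.13]{SPan}): evenly covered neighborhoods give the cover in the forward direction, and the whisker-topologized $\widetilde{X}_{H}$ with the hypothesis $\pi(\mathcal{U},x_{0})\leq H$ forcing the slices $\langle[\alpha]_{H},V\rangle$ to inject onto path-connected $V$ gives the converse. The remaining verifications you defer (continuity and openness of $p$, the equal-or-disjoint property of slices, continuity of the standard lift $t\mapsto[\alpha_{t}]_{H}$) are routine and your identification of where $\pi(\mathcal{U},x_{0})\leq H$ and local path connectedness are each used is accurate.
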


The standard construction is introduced by Spanier in \cite{SPan} when he was going to classify covering spaces of Peano space $X$ having at least one univesal covering space. Take $\alpha, \beta\in{P(X,x_{0})}$. We say that $\alpha$ and $\beta$ have the same equivalence classes, denoted by $\alpha\sim \beta$,  if and only if $\beta(1)=\alpha(1)$ and $[\alpha]\in{H[\beta]}$. Define $\widetilde{X}_{H}=P(X,x_{0})/\sim$. Let $[\alpha]_{H}$ denote the equivalence class of $\alpha$. Put $\tilde{x}_{H}=[c_{x_{0}}]_{H}$. We write $\widetilde{X}$ instead of $\widetilde{X}_{H}$ whenever H is trivial; it is called the standard construction. Let us recall that three types of topology have been studied $ \widetilde{X}_{H} $ so far. The quotient map $q:P(X,x_{0})\rightarrow \widetilde{X}_{H}$ induces the quotient topology on $\widetilde{X}_{H}$, denoted by $\widetilde{X}_{H}^{top}$, in which $P(X,x_{0})$ is equipped with the compact-open topology. In the attempt to construct covering spaces, topology Spanier defined on $\widetilde{X}_{H}$ is as follows; it is named the whisker topology by some people \cite{BrodC, VZcom} and denoted by $\widetilde{X}_{H}^{wh}$.
\begin{definition}
The Whisker topology on standard construction has the basis
\begin{center}

$N_{H}([\alpha]_{H}, U)=\lbrace [\gamma]_{H} \ \vert \  \gamma \simeq \alpha \ast \delta \ for \ some \ \delta \ inside \  open \ subset \ U \ of \ \alpha(1)=\delta(0) \rbrace$

\end{center}
\end{definition}

\begin{definition}
The Lasso topology on standard construction has the basis
\begin{center}

$N_{H}([\alpha]_{H}, \mathcal{U}, U)=\lbrace [\beta]_{H} \ \vert \  \beta \simeq \alpha \ast \gamma \ast \delta \ for \ some \ [\gamma]\in{\pi(\mathcal{U},\alpha(1))} \ and  \ for \ some \ \delta \ inside \ U \ of \ \alpha(1)=\delta(0)\rbrace$

\end{center}
where $U\in{\mathcal{U}}$. It is denoted as $\widetilde{X}^{l}_{H}$.
\end{definition}

It can be easily observed that $\pi_{1}(X,x_{0})$ is a subset of  $\widetilde{X}$. The induced topologies on $\pi_{1}(X,x_{0})$ by $\widetilde{X}^{wh}$,  $\widetilde{X}^{top}$, and  $\widetilde{X}^{l}$ are denoted as $\pi_{1}^{wh}(X,x_{0})$, $\pi_{1}^{qtop}(X,x_{0})$, and $\pi_{1}^{l}(X,x_{0})$, respectively (see \cite{BrazT, BrazO, BrodT, SPasha, SSPasha, VZcom} for more details). 

Semicovering maps are defined based on the local homeomorphism property (see \cite{BrazS, Brazss}). As in the definition of covering subgroup, H is a semicovering subgroup if it can be expressed in terms of a semicovering map such as $p:\widetilde{X}\rightarrow X$. It was shown that semicoverings correspond to open subgroups of quasitopological fundamental group $\pi_{1}^{qtop}(X)$ \cite{BrazS}. The authors in \cite{TorS} have tried to recognize which one of subgroups are open. This attempt led them to define a special subgroups are rather similar to Spanier subgroups. At first, they introduced path open cover $\mathcal{V}= \lbrace V_{\alpha} \ \vert \ \alpha\in{P(X,x_{0})} \ and \ \alpha(1)\in{V_{\alpha}} \rbrace$ which is an open cover of $X$. The subgroup $\widetilde{\pi}(\mathcal{V}, x_{0})\leq \pi_{1}(X,x_{0})$, called path Spanier subgroup, is generated by elements having the forms as $\alpha \ast \beta \ast \bar{\alpha}$, where $\beta$ is a loop at $\alpha(1)$ whose image is contained in $V_{\alpha}\in{\mathcal{V}}$.

\begin{theorem}\cite[Corollary 3.3]{TorS}\label{Th2}
For a given Peano space $X$, H is a semicovering subgroup if and only if it contains a path Spanier subgroup.
\end{theorem}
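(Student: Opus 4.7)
The plan is to prove the biconditional by passing through the endpoint projection $p : \widetilde{X}_H \to X$, $[\gamma]_H \mapsto \gamma(1)$, equipped with the whisker topology, and using the known fact that semicoverings are precisely endpoint projections that are local homeomorphisms (together with continuous path lifting). In both directions the key bridge is that ``$H$ absorbs loops in the chosen $V_\alpha$'' is exactly the statement $\widetilde{\pi}(\mathcal{V},x_{0})\le H$.

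For the direction ($\Leftarrow$), I would assume $\widetilde{\pi}(\mathcal{V},x_{0})\le H$ for a path open cover $\mathcal{V}=\{V_\alpha\}$ and, using the Peano hypothesis, shrink each $V_\alpha$ to be open and path connected. The goal is to show that the basic whisker neighborhood $N_{H}([\alpha]_{H}, V_\alpha)$ maps bijectively onto $V_\alpha$ under $p$: surjectivity is immediate from path connectedness of $V_\alpha$, and injectivity is where the hypothesis is used, since if $[\alpha\ast\delta_{1}]_{H}=p^{-1}(z)\cap N_H$ and $[\alpha\ast\delta_{2}]_{H}=p^{-1}(z)\cap N_H$ with $\delta_{1},\delta_{2}$ inside $V_\alpha$, then $\alpha\ast(\delta_{1}\ast\bar{\delta}_{2})\ast\bar{\alpha}$ is a generator of $\widetilde{\pi}(\mathcal{V},x_{0})$ and hence lies in $H$, forcing the two classes to coincide. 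Continuity of the local inverse, and hence the local homeomorphism property, follows from the definition of the whisker basis together with Peano local path connectedness. One then invokes Theorem \ref{Th2}'s underlying characterization (semicoverings are classified by open subgroups of $\pi_{1}^{qtop}(X,x_{0})$) or verifies continuous path lifting directly to conclude $H$ is a semicovering subgroup.

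For ($\Rightarrow$), I would start from a semicovering $p:\widetilde{X}\to X$ realizing $H=p_{\#}\pi_{1}(\widetilde{X},\tilde{x}_{0})$ and use the unique path lifting available for semicoverings to lift each $\alpha\in P(X,x_{0})$ to $\widetilde{\alpha}$ with $\widetilde{\alpha}(0)=\tilde{x}_{0}$. Since $p$ is a local homeomorphism at $\widetilde{\alpha}(1)$, I can select an open neighborhood $\widetilde{V}_\alpha$ of $\widetilde{\alpha}(1)$ mapping homeomorphically onto an open path connected neighborhood $V_\alpha$ of $\alpha(1)$ in $X$. Setting $\mathcal{V}=\{V_\alpha:\alpha\in P(X,x_{0})\}$ gives a path open cover. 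For any generator $\alpha\ast\beta\ast\bar{\alpha}$ of $\widetilde{\pi}(\mathcal{V},x_{0})$, the lift $\widetilde{\beta}$ of $\beta$ starting at $\widetilde{\alpha}(1)$ remains inside $\widetilde{V}_\alpha$ by uniqueness and therefore closes into a loop, so $\widetilde{\alpha}\ast\widetilde{\beta}\ast\overline{\widetilde{\alpha}}$ is a loop at $\tilde{x}_{0}$ in $\widetilde{X}$, yielding $[\alpha\ast\beta\ast\bar{\alpha}]\in H$.

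The main obstacle I anticipate is on the ($\Leftarrow$) side, namely promoting the set-theoretic bijection on each basic whisker neighborhood to a genuine local homeomorphism and, beyond that, verifying the continuous path lifting required by the definition of a semicovering map in the sense of \cite{BrazS}; this demands a careful interaction between the whisker basis and the path open cover, and it is the only place where the Peano hypothesis on $X$ is genuinely needed. In contrast, the ($\Rightarrow$) direction is essentially formal once the unique path lifting property for semicoverings is taken as given.
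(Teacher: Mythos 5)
The paper does not prove this statement at all: it is imported verbatim from \cite[Corollary 3.3]{TorS}, so there is no internal proof to compare against. In that source the result is obtained by a different route from yours: one first shows that a subgroup of $\pi_{1}(X,x_{0})$ is open in $\pi_{1}^{qtop}(X,x_{0})$ if and only if it contains a path Spanier subgroup, and then invokes Brazas' classification of semicovering subgroups as exactly the open subgroups of $\pi_{1}^{qtop}(X,x_{0})$ (the fact already quoted in Section 2 of this paper). Your ($\Rightarrow$) direction is correct and is essentially the classical lifting argument. Your ($\Leftarrow$) direction instead builds the semicovering directly as the endpoint projection $p_{H}\colon \widetilde{X}^{wh}_{H}\to X$; this is a legitimate alternative and, if carried out, yields more, namely an explicit model of the semicovering, and the injectivity argument on $N_{H}([\alpha]_{H},V_{\alpha})$ is correct as stated.

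Two things must be settled before the ($\Leftarrow$) direction is complete. First, you hedge between two incompatible strategies: if you ``invoke the classification by open subgroups,'' then the whisker computation is beside the point and you must instead prove that $\widetilde{\pi}(\mathcal{V},x_{0})$ is open in $\pi_{1}^{qtop}(X,x_{0})$, which is a compact-open/Lebesgue-number argument on $\Omega(X,x_{0})$ that your outline does not supply; if you stay with the direct construction, you must actually verify the semicovering axioms for $p_{H}$ --- that every path lifts (the standard lift $t\mapsto[\alpha\ast\gamma|_{[0,t]}]_{H}$, whose continuity into the whisker topology has to be checked), that lifts are unique (a local homeomorphism alone does not guarantee this when the total space need not be Hausdorff; one shows the agreement set of two lifts is clopen using injectivity of $p_{H}$ on the neighborhoods $N_{H}([\beta]_{H},V_{\beta})$), and continuous lifting, for which you would cite the fact that a surjective local homeomorphism with path lifting and unique path lifting over a Peano base is a semicovering. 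Second, to conclude that $H$ itself is a semicovering subgroup you must also compute $p_{H\#}\pi_{1}(\widetilde{X}^{wh}_{H},\tilde{x}_{H})=H$; this follows from unique path lifting, since a loop $\gamma$ lifts to a loop at $\tilde{x}_{H}$ precisely when $[\gamma]\in H$, but it appears nowhere in your outline, and without it you have only produced a semicovering, not one whose characteristic subgroup is $H$.
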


Unlike universal covering maps, generalized universal covering maps, initially defined by Fischer and Zastrow in \cite{Zastrow}, play  important role in finding fundamental group of complicated local spaces, e.g. Hawaiian Earring ( see \cite{Zastrow, FZ} for more details). After, this definition has been extended to general subgroups of $\pi_{1}(X,x_{0})$ by Brazas in \cite{Braz} as follows. 

\begin{definition}
A map $p:(\widetilde{X}, \tilde{x})\rightarrow (X,x)$ is called a generalized covering map if it has the following properties:
\begin{itemize}
\item[1.] $\widetilde{X}$ is a Peano space,
\item[2.] for every map $g:(Z,z)\rightarrow (X,x)$, there exists a unique map $\tilde{g}:(Z,z)\rightarrow (\widetilde{X}, \tilde{x})$ such that $p\circ \tilde{g}=g$ provided that $g_{\#}\pi_{1}(Z,z)\subseteq p_{\#}\pi_{1}(\widetilde{X}, \tilde{x})$
\end{itemize}
\end{definition}

In this extension, the unique lifting property of covering maps has been just used. Note that these two notions are not necessarily equal (see \cite[Proposition 3.6]{Zastrow}, \cite[Corollary 2.5.14]{SPan}). In \cite[Lemma 5.10]{Braz}, it is verified that each generalized covering map such as $p:(\widetilde{X}, \tilde{x}_{0})\rightarrow (X,x_{0})$ associated to a subgroup H, it means that $p_{\#}\pi_{1}(\widetilde{X}, \tilde{x}_{0})=H$,  is equivalent to a specific map, named endpoint projection map, $p_{H}: (\widetilde{X}_{H}^{wh}, \tilde{x}_{H}) \rightarrow (X,x_{0})$ with $p_{H}([\alpha]_{H})=\alpha(1)$. In other words, the topology of any generalized covering space coincides with the standard topology. 

\begin{definition}\cite[p. 190]{Zastrow} \label{Dh}
Let $\alpha\in{P(X,x_{0})}$ with $\alpha(1)=x$ and $g\notin{H}$. If there is an open subset $U\subseteq X$ containing $x$ such that $i_{\#}\pi_{1}(U,x)_{\bar{\alpha}}\cap Hg=\emptyset$, we say that $X$ is  homotopically Hausdorff (h.H for short) relative to H.

\end{definition}

\begin{definition} \cite{BrazO}\label{Dhp}
Let $ \alpha, \beta \in P(X,x_0) $  with $ \alpha(1)=\beta(1)=x $  and $ [\alpha] \notin H[\beta] $. Also, suppose that we have partition $\lbrace [t_{i-1},t_{i}] \vert \ 1\leq i\leq n \ such \ that \ t_{0}=0, \  t_{n}=1\rbrace$ of the unit interval $I$ and open subsets $ U_{1}, U_{2}, ..., U_{n} $ with $\alpha([t_{i-1},t_{i}])\subseteq U_{i} $ for $ i=1,2,...,n $. If $ \lambda\in P(X,x_0)$ is another path with $\lambda(1)=x$ in which $ \lambda([t_{i-1},t_{i}])\subseteq U_{i}$ and $\lambda(t_{i})=\alpha(t_{i})$ so that for $i=0,1,..., n $, $ [\lambda] \notin H[\beta]$, then we call $X$ is homotopically path Hausdorff (h.p.H for short) relative to H.

\end{definition}

Note that one of the requirements of the closeness of H in $\pi_{1}^{qtop}(X,x_{0})$ is that $p_{H}$ has the unique path lifting property.

\begin{theorem}\label{Th28} \cite{BrazO}
Let $X$ be a Peano space. If H is closed in $\pi_{1}^{qtop}(X,x_{0})$, then $p_{H}$ has the unique path lifting property.
\end{theorem}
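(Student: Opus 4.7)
The plan is to factor the implication through the condition that $X$ is homotopically path Hausdorff relative to $H$: I will first show that closedness of $H$ in $\pi_{1}^{qtop}(X,x_{0})$ forces the h.p.H condition of Definition \ref{Dhp}, and then that h.p.H relative to $H$ forces the unique path lifting property of $p_{H}$.

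\textbf{Closedness implies h.p.H.} Let $\alpha,\beta\in P(X,x_{0})$ share the endpoint $x$ with $[\alpha*\bar\beta]\notin H$, and consider the loop $\alpha*\bar\beta$ in $\Omega(X,x_{0})$. Since $\pi_{1}^{qtop}(X,x_{0})$ carries the quotient topology from the compact-open topology on $\Omega(X,x_{0})$, closedness of $H$ is equivalent to closedness of the preimage $q^{-1}(H)$, so there is a basic compact-open neighbourhood $\bigcap_{k}\langle K_{k},W_{k}\rangle$ of $\alpha*\bar\beta$ disjoint from $q^{-1}(H)$. Assuming (after refinement) the $K_{k}$ are intervals and taking a common refinement gives a partition of $I$ whose pieces each map into an intersection of the $W_{k}$. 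Restricting the data to the first half $[0,1/2]$ parametrizing $\alpha$ and rescaling yields a partition $0=t_{0}<\cdots <t_{n}=1$ and open sets $U_{1},\ldots,U_{n}$ with $\alpha([t_{i-1},t_{i}])\subseteq U_{i}$ as required by Definition \ref{Dhp}. For any admissible $\lambda$ (matching $\alpha$ at the $t_{i}$ and with $\lambda([t_{i-1},t_{i}])\subseteq U_{i}$), the loop $\lambda*\bar\beta$ still lies in $\bigcap_{k}\langle K_{k},W_{k}\rangle$, so $[\lambda*\bar\beta]\notin H$, i.e., $[\lambda]\notin H[\beta]$.

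\textbf{h.p.H implies unique path lifting.} Let $\gamma\in P(X,x_{0})$ and let $\tilde\gamma_{1},\tilde\gamma_{2}: I\to \widetilde{X}_{H}^{wh}$ be lifts starting at $\tilde x_{H}$. It suffices to show $\tilde\gamma_{1}(1)=\tilde\gamma_{2}(1)$, since applying the conclusion to each restriction $\gamma|_{[0,t]}$ gives agreement at every $t$. Write $\tilde\gamma_{i}(1)=[\alpha_{i}]_{H}$ and suppose for contradiction $[\alpha_{1}*\bar\alpha_{2}]\notin H$. Apply h.p.H to $\alpha_{1},\alpha_{2}$, producing a partition $\{s_{i}\}$ and open sets $\{U_{i}\}$ with $\alpha_{1}([s_{i-1},s_{i}])\subseteq U_{i}$. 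Continuity of the two lifts in the whisker topology, combined with the Lebesgue lemma on $I$, produces a partition $0=r_{0}<\cdots <r_{m}=1$ of $\gamma$'s domain and open sets $V_{j}\supseteq \gamma([r_{j-1},r_{j}])$ so that each step $\tilde\gamma_{k}(r_{j-1})\to \tilde\gamma_{k}(r_{j})$ happens within a single whisker neighbourhood $N_{H}(\cdot,V_{j})$; iterating yields representatives $\alpha_{1}\simeq \sigma_{1}*\cdots *\sigma_{m}$ and $\alpha_{2}\simeq \tau_{1}*\cdots *\tau_{m}$ with $\sigma_{j},\tau_{j}\subseteq V_{j}$ sharing the endpoints $\gamma(r_{j-1}),\gamma(r_{j})$. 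After aligning $\{r_{j}\}$ with $\{s_{i}\}$ by refinement and shrinking the $V_{j}$ to sit inside the relevant $U_{i}$, the path $\lambda:=\tau_{1}*\cdots *\tau_{m}$ satisfies the partition-and-cover hypotheses of h.p.H applied to $\alpha_{1}$, yet $[\lambda]=[\alpha_{2}]\in H[\alpha_{2}]$, contradicting the conclusion of h.p.H.

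\textbf{Main obstacle.} The delicate point is the last refinement step: the partition produced by h.p.H lives in the domain of the path $\alpha_{1}$ (which runs from $x_{0}$ to $\gamma(1)$ inside $X$), whereas the continuity data of the lifts $\tilde\gamma_{k}$ is expressed in the $\gamma$-parameter. Making the whisker-neighbourhood decomposition of $\alpha_{1}$ along $\tilde\gamma_{1}$ and that of $\alpha_{2}$ along $\tilde\gamma_{2}$ simultaneously compatible with each other and with the $\{s_{i}\}$-cover requires an interleaved shrinking of parameters and open sets in $X$, and is where the technical care is concentrated.
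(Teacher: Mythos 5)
The paper cites this theorem from \cite{BrazO} without giving a proof, so there is no internal argument to compare against; your route through the h.p.H condition is precisely the one suggested by the paper's own machinery (Theorem \ref{Th29} gives ``closed $\Rightarrow$ h.p.H'' and Proposition \ref{Pr28} gives ``h.p.H $\Rightarrow$ generalized covering,'' which has unique lifts). Your first step is essentially correct: since the subspace topology that $\pi_{1}(X,x_{0})$ inherits from $\widetilde{X}^{top}$ is no finer than the quotient topology coming from $\Omega(X,x_{0})$, closedness of $H$ yields a basic compact--open neighbourhood of $\alpha\ast\bar{\beta}$ missing the saturation of $H$, and converting its compact sets into a partition-and-cover datum for the $\alpha$-half of the loop is the standard argument; the $\bar{\beta}$-half is left untouched, so admissible $\lambda$ give loops $\lambda\ast\bar{\beta}$ still inside the neighbourhood.

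The second step, however, contains a genuine gap, and it is exactly the one you flag as the ``main obstacle'' without resolving it. You apply h.p.H to a pair $(\alpha_{1},\alpha_{2})$ of arbitrary representatives of $\tilde{\gamma}_{1}(1)$ and $\tilde{\gamma}_{2}(1)$. The partition $\{s_{i}\}$ and the open sets $U_{i}$ that h.p.H hands back are adapted to the parametrization and image of $\alpha_{1}$, which is just some path from $x_{0}$ to $\gamma(1)$ bearing no relation to $\gamma$; an admissible $\lambda$ must pass through the points $\alpha_{1}(s_{i})$ at the times $s_{i}$. The candidate $\lambda=\tau_{1}\ast\cdots\ast\tau_{m}$ built from the whisker-continuity of $\tilde{\gamma}_{2}$ only hits points of the form $\gamma(r_{j})$, and no refinement or shrinking repairs this: refining only adds interpolation points on $\alpha_{1}$, whose interior values need not lie on $\gamma$ at all. (A secondary imprecision: iterating whisker neighbourhoods gives $[\alpha_{1}]\in H[\sigma_{1}\ast\cdots\ast\sigma_{m}]$, not $\alpha_{1}\simeq\sigma_{1}\ast\cdots\ast\sigma_{m}$.) The standard repair is to compare each lift with the standard lift $w(t)=[\gamma|_{[0,t]}]_{H}$: if $\tilde{\gamma}(1)=[\beta]_{H}\neq[\gamma]_{H}$, apply h.p.H to the pair $(\gamma,\beta)$, so the partition and the $U_{i}$ live on $\gamma$ itself; whisker-continuity of $\tilde{\gamma}$ along a refinement then produces paths $\delta_{j}$ in $W_{j}\subseteq U_{i(j)}$ from $\gamma(r_{j-1})$ to $\gamma(r_{j})$ with $\tilde{\gamma}(1)=[\delta_{1}\ast\cdots\ast\delta_{m}]_{H}$, and $\lambda=\delta_{1}\ast\cdots\ast\delta_{m}$ is admissible for $(\gamma,\beta)$ yet satisfies $[\lambda]\in H[\beta]$, the desired contradiction. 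Two arbitrary lifts then agree because each agrees with $w$. With this restructuring your argument closes up.
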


\begin{theorem}\label{Th29}
If $H$ is closed in $\pi_{1}^{qtop}(X,x_{0})$, then $X$ is h.p.H relative to H. If $X$ is Peano and h.p.H relative to H, then H is closed in $\pi_{1}^{qtop}(X,x_{0})$.
\end{theorem}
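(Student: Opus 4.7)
The plan is to prove the two implications separately using the identification of $\pi_1^{qtop}(X,x_0)$ as the quotient of $\Omega(X,x_0)$ (with the compact-open topology) by the map $q$, so that closedness of $H$ is equivalent to $q^{-1}(H)$ being closed in $\Omega(X,x_0)$.

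For ``closed $\Rightarrow$ h.p.H'', take $\alpha,\beta\in P(X,x_0)$ with common endpoint $x$ and $[\alpha]\notin H[\beta]$. The loop $\alpha*\bar\beta$ then satisfies $[\alpha*\bar\beta]\notin H$, so closedness supplies a subbasic compact-open neighborhood $\bigcap_{j=1}^{m}\langle[s_{j-1},s_j],V_j\rangle$ of $\alpha*\bar\beta$ in $\Omega(X,x_0)$ whose $q$-image avoids $H$. After refining, one may assume $s_k=1/2$ for some $k$; the first half of the partition converts, via $t_i=2s_i$, into a partition $0=t_0<t_1<\dots<t_k=1$ of $I$ with open sets $U_i=V_i$ obeying $\alpha([t_{i-1},t_i])\subseteq U_i$. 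For any test path $\lambda$ satisfying $\lambda([t_{i-1},t_i])\subseteq U_i$ and $\lambda(t_i)=\alpha(t_i)$, the loop $\lambda*\bar\beta$ agrees with $\alpha*\bar\beta$ on $[1/2,1]$ and lies inside $V_i$ on each $[s_{i-1},s_i]$ with $i\le k$; hence it sits in the chosen neighborhood, so $[\lambda*\bar\beta]\notin H$, i.e., $[\lambda]\notin H[\beta]$.

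For the converse, assume $X$ is Peano and h.p.H relative to $H$; fix $\gamma\in\Omega(X,x_0)$ with $[\gamma]\notin H$ and produce a compact-open neighborhood of $\gamma$ disjoint from $q^{-1}(H)$. Apply h.p.H with $\alpha=\gamma$ and $\beta=c_{x_0}$ to obtain a partition $t_0<\dots<t_n$ and open sets $U_i$. Using local path connectedness, shrink to path-connected open sets $W_i\subseteq U_i\cap U_{i+1}$ containing $\gamma(t_i)$ for $1\le i\le n-1$, and let $\mathcal{W}$ consist of all $\mu\in\Omega(X,x_0)$ with $\mu([t_{i-1},t_i])\subseteq U_i$ and $\mu(t_i)\in W_i$. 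Given $\mu\in\mathcal{W}$, pick a path $\tau_i$ in $W_i$ from $\mu(t_i)$ to $\gamma(t_i)$ (with $\tau_0=\tau_n=c_{x_0}$), and set $\lambda|_{[t_{i-1},t_i]}=\bar\tau_{i-1}*(\mu|_{[t_{i-1},t_i]})*\tau_i$. Then $\lambda(t_i)=\gamma(t_i)$, $\lambda([t_{i-1},t_i])\subseteq U_i$, and iterated cancellation of the loops $\tau_i*\bar\tau_i$ yields $\lambda\simeq\mu$. If $[\mu]\in H$ then $[\lambda]\in H=H[c_{x_0}]$, contradicting h.p.H; therefore $\mathcal{W}\cap q^{-1}(H)=\emptyset$, as required.

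The main obstacle is the converse direction: h.p.H only constrains paths pinned to $\gamma(t_i)$ at each partition instant, whereas a generic loop in a compact-open neighborhood of $\gamma$ is free to wander at those points. Peano-ness and the shrinking step $W_i\subseteq U_i\cap U_{i+1}$ are precisely what allow the small correcting paths $\tau_i$ to exist, turning $\mu$ into an admissible $\lambda$ homotopic to $\mu$; without local path-connectedness the bridge between the compact-open neighborhood and the h.p.H hypothesis collapses.
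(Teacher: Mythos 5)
Your argument is correct, but note that the paper itself offers no proof of this statement: it is quoted (following Theorem~\ref{Th28}) from Brazas--Fabel \cite{BrazO}, and your two-step argument --- extracting a partition-type basic compact-open neighborhood of $\alpha\ast\bar{\beta}$ for the forward direction, and using local path connectedness to splice small correcting paths $\tau_i$ that pin an arbitrary loop in a compact-open neighborhood to $\gamma(t_i)$ for the converse --- is essentially the proof given there. The only point worth making explicit is your opening identification: this paper defines $\pi_{1}^{qtop}(X,x_{0})$ as a subspace of $\widetilde{X}^{top}$, whereas your argument works with the quotient topology inherited from $\Omega(X,x_{0})$; these agree in the setting at hand (and in the source \cite{BrazO} the quotient description is the definition), but a careful write-up should say a word about why closedness of $H$ in the subspace sense is equivalent to closedness of $q^{-1}(H)$ in $\Omega(X,x_{0})$, since only the implication you use in the first half is automatic.
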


The propostion below refers to necessary and sufficient conditions for $p_{H}$ becomes a generalized covering map.

\begin{proposition}\cite{Braz}\label{Pr28}
Let $H\leq\pi_{1}(X,x_{0})$. Then
\begin{itemize}
\item If $p_{H}$ is generalized covering, $X$ is h.H relative to H.

\item $p_{H}$  is generalized covering if $X$ is h.p.H relative to H.
\end{itemize}
\end{proposition}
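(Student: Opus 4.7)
The plan is to treat the two implications separately, as they rely on different machinery. For the first implication, I would proceed by contrapositive, using the unique lifting property that any generalized covering map must enjoy. For the second, I would invoke the two theorems on unique path lifting already quoted in the paper (Theorem \ref{Th28} and Theorem \ref{Th29}), together with a direct verification of the Peano hypothesis on $\widetilde{X}_H^{wh}$.

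For the first bullet, suppose for contradiction that $p_H$ is a generalized covering but $X$ fails to be h.H relative to $H$. Then there exist $\alpha \in P(X,x_0)$ with $\alpha(1)=x$ and some $g \notin H$ such that every open $U \ni x$ contains a loop $\delta_U$ at $x$ with $[\alpha \ast \delta_U \ast \bar\alpha] \in Hg$. Since $g \notin H$, one has $[\delta_U] \notin H_\alpha$, so the two points $[\alpha]_H$ and $[\alpha \ast \delta_U]_H$ in the fiber $p_H^{-1}(x) \subseteq \widetilde{X}_H^{wh}$ are distinct; yet $[\alpha \ast \delta_U]_H \in N_H([\alpha]_H, U)$. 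Shrinking $U$ produces loops $\delta_n$ that converge uniformly to $c_x$, with lifts $\tilde\delta_n$ starting at $[\alpha]_H$ and ending at distinct points $[\alpha \ast \delta_n]_H \neq [\alpha]_H$. On the other hand, unique path lifting (which every generalized covering possesses) forces the lift of $c_x$ from $[\alpha]_H$ to be constant. A standard continuity argument for $p_H$ — exploiting that the lifts are themselves small paths inside the whisker neighborhoods $N_H([\alpha]_H, U_n)$ — then contradicts unique path lifting of the limiting constant loop, giving the desired contradiction.

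For the second bullet, assume $X$ is h.p.H relative to $H$. I would first verify that $\widetilde{X}_H^{wh}$ is Peano: path connectedness is immediate from the canonical path $t \mapsto [\alpha_t]_H$, where $\alpha_t(s) = \alpha(ts)$, joining $\tilde{x}_H$ to any $[\alpha]_H$; and local path connectedness follows by observing that the basic whisker neighborhoods $N_H([\alpha]_H, U)$ are path connected whenever $U$ is path connected, which may be arranged because $X$ itself is tacitly assumed Peano. Next, Theorem \ref{Th29} gives that $H$ is closed in $\pi_{1}^{qtop}(X,x_0)$, and Theorem \ref{Th28} then yields that $p_H$ has the unique path lifting property. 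To upgrade this to the unique lifting property required in the definition of a generalized covering, I would use the standard factorization: any $g:(Z,z)\to(X,x)$ satisfying $g_{\#}\pi_1(Z,z)\subseteq p_{H\#}\pi_1(\widetilde{X}_H^{wh},\tilde{x}_H)=H$ can be lifted pointwise by lifting the images of paths from $z$ in $Z$, and well-definedness of the resulting map $\tilde g$ follows precisely from the subgroup containment together with unique path lifting.

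The main obstacle, as is often the case with results on the standard construction, is the first implication: one must convert the abstract failure of h.H into a concrete violation of unique path lifting. The cleanest way is to build a single lift that is forced to be non-unique, rather than passing to a limit of lifts; I would do this by considering a path in $X$ that retraces $\alpha$ and then executes $\delta_U$, comparing its two candidate lifts — one ending at $[\alpha]_H$ via a nullhomotopy inside $U$, and one ending at $[\alpha \ast \delta_U]_H$ — so as to isolate the contradiction without any convergence argument.
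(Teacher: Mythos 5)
A preliminary remark: the paper does not prove this proposition — it is quoted from Brazas — so your proposal can only be assessed on its own terms. Your second bullet is essentially the standard route and is sound: $\widetilde{X}_H^{wh}$ is automatically Peano (each basic set $N_H([\alpha]_H,U)$ is path connected via $t\mapsto[\alpha\ast\delta_t]_H$, with no hypothesis on $U$ needed), and h.p.H $\Rightarrow$ $H$ closed in $\pi_1^{qtop}(X,x_0)$ $\Rightarrow$ unique path lifting $\Rightarrow$ generalized covering works, although Theorems \ref{Th28} and \ref{Th29} both require $X$ Peano (an assumption absent from the statement), and you should also check continuity of the constructed lift $\tilde{g}$, not only its well-definedness.

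The genuine gap is in the first bullet: neither of your two proposed mechanisms actually yields the contradiction. The ``shrinking $U$'' version invokes loops $\delta_n$ ``converging uniformly to $c_x$'' and a ``limit of lifts''; this presupposes a metric or first countability and neighborhoods shrinking to $\{x\}$, none of which is given, and in any case a sequence of lifts of distinct paths is not, by itself, a violation of unique path lifting of any single path. Your proposed ``cleaner'' alternative fails outright: for a fixed $U$, the path $\alpha\ast\delta_U$ has exactly one lift starting at $\tilde{x}_H$, namely the standard one ending at $[\alpha\ast\delta_U]_H$; a ``candidate lift ending at $[\alpha]_H$'' would force $[\alpha\ast\delta_U]_H=[\alpha]_H$, which is precisely what is false. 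For a single $U$ the failure of h.H carries no information against unique path lifting — the contradiction must use all $U$ simultaneously. The missing construction is a second lift of the constant path $c_x$: set $[\beta]=g[\alpha]$, so $[\beta]_H\neq[\alpha]_H$ but, for every open $U\ni x$, $[\beta]_H=[\alpha\ast\delta_U]_H\in N_H([\alpha]_H,U)$ and, symmetrically (from $[\alpha]=h^{-1}[\beta\ast\bar{\delta}_U]$), $[\alpha]_H\in N_H([\beta]_H,U)$. Then the map $\tilde{\gamma}$ with $\tilde{\gamma}(t)=[\alpha]_H$ for $t<1$ and $\tilde{\gamma}(1)=[\beta]_H$ is continuous, lifts $c_x$, and differs from the constant lift at $[\alpha]_H$; prepending the standard lift of $\alpha$ turns this into two distinct lifts of a path from the basepoint, contradicting the uniqueness clause of the generalized covering definition applied to $Z=[0,1]$. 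Without this (or an equivalent) construction, the first implication is not proved.
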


In \cite{BrodT}, Brodskiy et al. initially inroduced the concept of (strong) small loop transfer spaces. The extended version of them is defined based on an arbitrary subgroup of $\pi_{1}(X,x_{0})$. 

\begin{definition}\cite[Definition 2.11]{SPasha}
Let $ \alpha \in{P(X,x_{0})}$ with $\alpha(1)=x$. If for each open subset $U$ at $x_{0}$ there exists an open subset $V$ at $x$ such that $i_{\#}\pi_{1}(V, \alpha(1))_{\bar{\alpha}}\subseteq Hi_{\#}\pi_{1}(U,x_{0})$, we call $X$ is an H-small loop transfer (H-SLT for short) space at $x_{0}$. However, we say that $X$ is an H-SLT space if for each $\delta\in{P(X,x_{0})}$ with $\delta(1)=x$, $X$ is $H_{\delta}$-SLT at $x$. We write SLT instead of H-SLT whenever H is trivial.
\end{definition}

\begin{definition}\cite[Definition 1.3]{SSPasha}
Let for each $x\in X$ and for every open subset $U$ at $x_{0}$ there exists an open subset $V$ at $x$ so that for every $\alpha\in{P(X,x_{0}})$ with $\alpha(1)=x$ we have $i_{\#}\pi_{1}(V, \alpha(1))_{\bar{\alpha}}\subseteq Hi_{\#}\pi_{1}(U,x_{0})$. Then we call $X$ is a strong H-SLT space at $x_{0}$. However, we say that $X$ is a strong H-SLT space if for each  $\delta\in{P(X,x_{0})}$ with $\delta(1)=x$, $X$ is strong $H_{\delta}$-SLT at $x$. As in the above definition, we write strong SLT instead of strong H-SLT whenever H is trivial.
\end{definition}

\section{Main results} 

Definitions \ref{Dh} and \ref{Dhp} and their relations have been investigated by some people in \cite{Braz, FG, Zastrow, SPasha}. One of significant features of these kinds of spaces can be seen in Proposition \ref{Pr28}. Though every h.p.H relative to $H$ is h.H relative to $H$, but the converse need not to be true (see \cite{FZ, VZA}). It is of importance to determine when these two notions are coincident. Recall that the property of being semilocally simply connected causes the coincidence of these concepts. Even the authors in \cite[Theorem 2.5]{SPasha} showed that this statement holds for small loop transfer spaces relative to H provided that H is normal. One of main purposes of this article is expanding this theorem. In fact, we consider another subgroup instead of normal subgroup; it is called locally quasinormal subgroup.

\begin{definition}\label{De1}\cite{FG}
Let  $\alpha\in{P(X,x_{0})}$ with $\alpha(1)=x$. If for each open subset $x\in{U}$ there exists an open subset $V\subseteq U$ of $x$ such that $H\pi(\alpha,V)=\pi(\alpha,V)H$, we say that H is locally quasinormal.
\end{definition}

\begin{lemma}\label{L1}
Let $\alpha\in{P(X,x_{0})}$. If H is a locally quasinormal subgroup, then so is $H_{\alpha}$.
\end{lemma}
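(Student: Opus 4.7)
The plan is to transfer the locally quasinormal property of $H$ from $\pi_{1}(X,x_{0})$ to $H_{\alpha}$ in $\pi_{1}(X,\alpha(1))$ via the change-of-basepoint isomorphism. Put $x=\alpha(1)$ and consider the standard isomorphism $\varphi_{\alpha}:\pi_{1}(X,x)\lo\pi_{1}(X,x_{0})$ given by $\varphi_{\alpha}([\gamma])=[\alpha\ast\gamma\ast\bar{\alpha}]$. Directly from the definition $H_{\alpha}=[\bar{\alpha}H\alpha]$ one checks $\varphi_{\alpha}(H_{\alpha})=H$, so $\varphi_{\alpha}^{-1}$ turns the locally quasinormal identity for $H$ into the required identity for $H_{\alpha}$ provided that it also maps the relevant path--Spanier subgroups in $\pi_{1}(X,x)$ onto the corresponding ones in $\pi_{1}(X,x_{0})$.

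To set this up, given an arbitrary path $\beta\in P(X,x)$ with $\beta(1)=y$ and an open neighbourhood $U$ of $y$, I would concatenate to form $\alpha\ast\beta\in P(X,x_{0})$, which satisfies $(\alpha\ast\beta)(1)=y$. Applying Definition \ref{De1} to the path $\alpha\ast\beta$ and the open set $U$ yields an open $V\sub U$ with $y\in V$ such that
\[
H\,\pi(\alpha\ast\beta,V)=\pi(\alpha\ast\beta,V)\,H
\]
inside $\pi_{1}(X,x_{0})$. The next step is the notational identification $\varphi_{\alpha}\bigl(\pi(\beta,V)\bigr)=\pi(\alpha\ast\beta,V)$, which is immediate from the equality $[\alpha\ast(\beta\ast\gamma\ast\bar{\beta})\ast\bar{\alpha}]=[(\alpha\ast\beta)\ast\gamma\ast\overline{(\alpha\ast\beta)}]$ on each generator coming from a loop $\gamma$ inside $V$ at $y$.

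Finally, I would apply $\varphi_{\alpha}^{-1}$ to both sides of the displayed identity. Since $\varphi_{\alpha}^{-1}$ is a group isomorphism it respects set-wise products of subgroups, and it carries $H$ to $H_{\alpha}$ and $\pi(\alpha\ast\beta,V)$ back to $\pi(\beta,V)$; hence
\[
H_{\alpha}\,\pi(\beta,V)=\pi(\beta,V)\,H_{\alpha}
\]
in $\pi_{1}(X,x)$, which is precisely what is needed to conclude that $H_{\alpha}$ is locally quasinormal. The main obstacle I anticipate is largely bookkeeping: making sure that Definition \ref{De1} as stated is really a universal statement over all $\alpha\in P(X,x_{0})$ (otherwise the idea still works but the formulation needs care), and verifying the subgroup equality $\varphi_{\alpha}\bigl(\pi(\beta,V)\bigr)=\pi(\alpha\ast\beta,V)$ on generators without accidentally mixing up basepoints.
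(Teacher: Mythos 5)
Your proposal is correct and is essentially the paper's own argument: the paper also applies local quasinormality to the concatenated path $\alpha\ast\delta$ and then conjugates the resulting identity $H\,\pi(\alpha\ast\delta,V)=\pi(\alpha\ast\delta,V)\,H$ back by $\alpha$, element by element, which is exactly your application of $\varphi_{\alpha}^{-1}$ unpacked. Your packaging via the change-of-basepoint isomorphism is a slightly cleaner way to state the same computation (and your worry about the quantifier in Definition \ref{De1} is resolved by how the paper itself uses it, namely universally over all paths based at $x_{0}$).
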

\begin{proof}
Let $\delta$ be an arbitrary path from $x$ to $y$. Because H is locally quasinormal, we have $H\pi(\alpha \ast \delta, V)=\pi(\alpha \ast \delta, V)H$. Now, we show that $H_{\alpha}\pi(\delta, V)=\pi(\delta, V)H_{\alpha}$. At first, take an element $[\bar{\alpha}\ast h \ast \alpha \ast \delta \ast \beta \ast \bar{\delta}]\in{H_{\alpha}\pi(\delta, V)}$, where $[h]\in{H}$ and $[\beta]\in{\pi_{1}(V,y)}$. Note that $[\bar{\alpha}\ast h \ast \alpha \ast \delta \ast \beta \ast \bar{\delta}]=[\bar{\alpha}\ast h \ast \alpha \ast \delta \ast \beta \ast \bar{\delta} \ast \bar{\alpha}\ast \alpha]$. Clearly, $[h \ast \alpha \ast \delta \ast \beta \ast \bar{\delta} \ast \bar{\alpha}]\in{H\pi(\alpha \ast \delta, V)}$. According to the above relation, there exist $[h']\in{H}$ and $[\beta']\in{\pi_{1}(V,y)}$ such that $[h \ast \alpha \ast \delta \ast \beta \ast \bar{\delta} \ast \bar{\alpha}]=[\alpha \ast \delta \ast \beta' \ast \bar{\delta}\ast \bar{\alpha}\ast h']\in{\pi(\alpha \ast \delta, V)H}$. Therefore, $[\bar{\alpha}\ast h \ast \alpha \ast \delta \ast \beta \ast \bar{\delta}]=[\bar{\alpha}\ast h \ast \alpha \ast \delta \ast \beta \ast \bar{\delta}\ast \bar{\alpha}\ast \alpha]=[\bar{\alpha}\ast \alpha \ast \delta \ast \beta' \ast \bar{\delta}\ast \bar{\alpha}\ast h' \ast \alpha]=[\delta \ast \beta' \ast \bar{\delta}\ast \bar{\alpha}\ast h' \ast \alpha]\in{\pi(\delta, V)H_{\alpha}}$ which implies that $H_{\alpha}\pi(\delta, V)\subseteq \pi(\delta, V)H_{\alpha}$. In similar way, we can follow $\pi(\delta, V)H_{\alpha}\subseteq H_{\alpha}\pi(\delta, V)$.
\end{proof}

\begin{remark}\label{Re}
Assume that H is a locally quasinormal subgroup. 
 By the proof of lemma \ref{L1}, if we put constant path $c_{x}$ instead of $\delta$, then $H_{\alpha}\pi(c_{x}, V)=\pi(c_{x},V)H_{\alpha}$. Note that $\pi(c_{x}, V)=i_{\#}(V,x)$, where $i:V\rightarrow X$ is a inclusion map.
\end{remark}
 
\begin{theorem}\label{Th44}
Let $X$ be an $H$-SLT space at $x_{0}$ and $H$ be locally quasinormal. If $X$ is h.H relative to $H$, then $X$ is h.p.H relative to $H$.
\end{theorem}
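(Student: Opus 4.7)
The plan is to verify the defining condition of h.p.H relative to $H$ directly for a given pair $\alpha,\beta\in P(X,x_{0})$ with $\alpha(1)=\beta(1)=x$ and $[\alpha]\notin H[\beta]$; note that this hypothesis is equivalent to $g:=[\beta\ast\bar{\alpha}]\notin H$. First I would apply h.H at $x_{0}$ with the constant path $c_{x_{0}}$ and the class $g$ to pick an open $U'\ni x_{0}$ with $i_{\#}\pi_{1}(U',x_{0})\cap Hg=\emptyset$. Then, since $H$ is locally quasinormal, Definition \ref{De1} applied to the path $c_{x_{0}}$ (see Remark \ref{Re}) produces a refinement $U\subseteq U'$ on which $Hi_{\#}\pi_{1}(U,x_{0})=i_{\#}\pi_{1}(U,x_{0})H$, while $i_{\#}\pi_{1}(U,x_{0})\cap Hg=\emptyset$ is preserved.

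For every $s\in[0,1]$, I would apply the $H$-SLT hypothesis at $x_{0}$ to the path $\alpha_{[0,s]}$ and the open set $U$, obtaining an open $W_{s}\ni\alpha(s)$ satisfying $i_{\#}\pi_{1}(W_{s},\alpha(s))_{\bar{\alpha}_{[0,s]}}\subseteq H\cdot i_{\#}\pi_{1}(U,x_{0})$. The family $\{\alpha^{-1}(W_{s})\}_{s\in[0,1]}$ is an open cover of $[0,1]$; a standard compactness argument, together with choosing overlap points between consecutive components of a finite subcover, yields a partition $0=t_{0}<t_{1}<\cdots<t_{n}=1$ together with points $s(i)\in[t_{i-1},t_{i}]$ (taken to be an endpoint of each subinterval) such that $\alpha([t_{i-1},t_{i}])\subseteq U_{i}:=W_{s(i)}$. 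The containment $s(i)\in[t_{i-1},t_{i}]$ is what guarantees that the sub-path $\tau_{i}:=\alpha_{[t_{i-1},s(i)]}$ stays inside $U_{i}$.

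Now consider any $\lambda\in P(X,x_{0})$ with $\lambda(t_{i})=\alpha(t_{i})$ and $\lambda([t_{i-1},t_{i}])\subseteq U_{i}$, and set $\tilde{\eta}_{i}:=\lambda_{[t_{i-1},t_{i}]}\ast\bar{\alpha}_{[t_{i-1},t_{i}]}$ (a loop in $U_{i}$ based at $\alpha(t_{i-1})$) together with $\eta_{i}:=\alpha_{[0,t_{i-1}]}\ast\tilde{\eta}_{i}\ast\bar{\alpha}_{[0,t_{i-1}]}$. Iterated insertion of $\bar{\alpha}_{[0,t_{i-1}]}\ast\alpha_{[0,t_{i-1}]}$ yields the homotopy $\lambda\simeq\eta_{1}\ast\eta_{2}\ast\cdots\ast\eta_{n}\ast\alpha$. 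Conjugating $\tilde{\eta}_{i}$ by $\tau_{i}$ inside $U_{i}$ rewrites $[\eta_{i}]$ as the transport along $\bar{\alpha}_{[0,s(i)]}$ of a loop in $W_{s(i)}$ based at $\alpha(s(i))$, so the conclusion of the second paragraph gives $[\eta_{i}]\in H\cdot i_{\#}\pi_{1}(U,x_{0})$ for every $i$. Repeated use of $Hi_{\#}\pi_{1}(U,x_{0})=i_{\#}\pi_{1}(U,x_{0})H$ absorbs the $H$-factors past the small-loop factors, and since $i_{\#}\pi_{1}(U,x_{0})$ is a subgroup the product collapses to $[\eta_{1}]\cdots[\eta_{n}]=h\ell$ with $h\in H$ and $\ell\in i_{\#}\pi_{1}(U,x_{0})$; hence $[\lambda]=h\ell[\alpha]$.

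Assuming for contradiction that $[\lambda]\in H[\beta]$ would force $h\ell[\alpha]=h''[\beta]$ for some $h''\in H$, hence $\ell=h^{-1}h''[\beta\ast\bar{\alpha}]\in Hg$, contradicting the disjointness set up in the first paragraph. Therefore $[\lambda]\notin H[\beta]$, completing the verification of h.p.H via Definition \ref{Dhp}. The main obstacle I anticipate is producing the partition in the second paragraph so that each $s(i)$ genuinely lies in $[t_{i-1},t_{i}]$ (otherwise the connecting path $\tau_{i}$ might leave $U_{i}$) and controlling the $n$-fold rearrangement in the third paragraph within a single coset $H\cdot i_{\#}\pi_{1}(U,x_{0})$; the locally quasinormal hypothesis, channeled through Remark \ref{Re}, is exactly what replaces the normality of $H$ assumed in \cite[Theorem 2.5]{SPasha} at this bookkeeping step.
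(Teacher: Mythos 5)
Your proposal is correct and follows essentially the same route as the paper's proof: use h.H to get a neighborhood $U$ of $x_{0}$ missing $H[\beta\ast\bar{\alpha}]$, shrink it by local quasinormality so that $Hi_{\#}\pi_{1}(U,x_{0})=i_{\#}\pi_{1}(U,x_{0})H$, transfer the small loops $\lambda_{[t_{i-1},t_{i}]}\ast\bar{\alpha}_{[t_{i-1},t_{i}]}$ back to $x_{0}$ via the H-SLT property along $\alpha$, collapse the resulting product of cosets using quasinormality, and derive the contradiction with the initial disjointness. Your extra care with the points $s(i)$ and the connecting paths $\tau_{i}$ only tightens the compactness step that the paper treats more briefly; the argument is otherwise identical.
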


\begin{proof}
Suppose that $[\beta \ast \bar{\alpha}]\notin{H}$ where $\alpha, \beta \in{P(X,x_{0})}$ and $\alpha(1)=\beta(1)=x$. Since $X$ is h.H relative to H, we have an open subset $U\subseteq X$ of $x_{0}$ such that $i_{\#}\pi_{1}(U,x_{0})\cap H[\beta \ast \bar{\alpha}]=\emptyset$. On the other hand, because $H$ is locally quasinormal subgroup, there is an open subset $V\subseteq U$ of $x_{0}$ such that $Hi_{\#}\pi_{1}(V,x_{0})=i_{\#}\pi_{1}(V,x_{0}) H$ and $i_{\#}\pi_{1}(V,x_{0})\cap H[\beta \ast \bar{\alpha}]=\emptyset$. For every $t\in{[0,1]}$, consider the path $\alpha_{t}=\alpha\mid_{[0,t]}$ from $x_{0}$ to $\alpha_{t}(1)=\alpha(t)=x_{t}$ and also put $\alpha_{t_{0}}=c_{x_{0}}$. By assumption, we have open subset $V_{t}\subseteq X$ at $x_{t}$ such that for any closed path $\beta_{t}$ at $x_{t}$ in $V_{t}$ there is a closed path $\gamma_{t}$ at $x_{0}$ in $V$ such that $[\alpha_{t} \ast \beta_{t} \ast \bar{\alpha}_{t}]_{H}=[\delta_{t}]_{H}$ or equivalently, $[\alpha_{t} \ast \beta_{t} \ast \bar{\alpha}_{t}] \in{H[\delta_{t}]}$. By the compactness of closed interval $I=[0,1]$ and the continuity of $\alpha$, we have a partition $\lbrace [t_{i-1},t_{i}] \vert \ i=1,2,...,n \ such \ that \ t_{0}=0, \  t_{n}=1\rbrace$ of  $[0,1]$ and open subsets $U_{1}, ..., U_{n}$ such that $\alpha  [t_{i-1},t_{i}]  \subseteq U_{i}$. Put $\alpha_{i}:=\alpha|_{[t_{i-1}, t_{i}]}$ for $i=1,2,...,n$. Choose another path such as $\gamma$ from $x_{0}$ to $x$ such that image of $\gamma_{i}:=\gamma|_{[t_{i-1}, t_{i}]}$  is contained in $U_{i}$ for $i=1,2,...,n$ and $\gamma(t_{i})=\alpha(t_{i})$ for $i=0,1,...,n$. If we put $\theta_{i}=\alpha_{t_{i-1}} \ast \gamma_{i} \ast \bar{\alpha}_{i}\ast \bar{\alpha}_{t_{i-1}}$ for $1\leq i\leq n$. It is not difficult to see that $\theta_{i}$'s are loop at $\alpha(t_{i-1})$ in $U_{i}$. As stated, for $\theta_{1}, ...., \theta_{n}$ we have, respectively, $[\delta_{1}],..., [\delta_{n}]$ belong to $i_{\ast}\pi_{1}(V, x_{0})$ such that $[\theta_{i}]\in{H[\delta_{i}]}$ for $i=1,2,...,n$. Since $Hi_{\#}\pi_{1}(V,x_{0})=i_{\#}\pi_{1}(V,x_{0}) H$, $H[\delta_{1}]H[\delta_{2}]\in{HHi_{\#}\pi_{1}(V,x_{0})}[\delta_{2}]=Hi_{\#}\pi_{1}(V,x_{0})$ and in a silmilar way  $H[\delta_{1}]H[\delta_{2}]H[\delta_{3}]\in{Hi_{\#}\pi_{1}(V,x_{0})}H[\delta_{3}]=HHi_{\#}\pi_{1}(V,x_{0})[\delta_{3}]=Hi_{\ast}\pi_{1}(V,x_{0})$. By continuity this process, $H[\delta_{1}]H[\delta_{2}]...H[\delta_{n}]\in{Hi_{\#}\pi_{1}(V,x_{0})}$. Note that $\gamma \ast \bar{\alpha}=\theta_{1}\ast...\ast \theta_{n}$. Therefore, $[\gamma \ast \bar{\alpha}]=[\theta_{1}\ast...\ast \theta_{n}]\in{H[\delta_{1}]H[\delta_{2}]...H[\delta_{n}]}=Hi_{\#}\pi_{1}(V,x_{0})$. In other words, there exists a closed path $\delta$ at $x_{0}$ in $V$ such that $[\gamma \ast \bar{\alpha}]\in{H[\delta]}$, i.e., $[\gamma \ast \bar{\alpha}\ast \bar{\delta}]\in{H}$. We have $[\gamma \ast \bar{\beta}]=[\gamma \ast \bar{\alpha}\ast \bar{\delta}][\delta\ast\alpha\ast \bar{\beta}]$. Since $i_{\#}\pi_{1}(V,x_{0})\cap H[\beta \ast \bar{\alpha}]=\emptyset$, so $[\delta\ast\alpha\ast \bar{\beta}]\notin{H}$. Therefore, $[\gamma \ast \bar{\beta}]\notin{H}$ because $[\gamma \ast \bar{\alpha}\ast \bar{\delta}]\in{H}$. This means that $X$ is h.p.H relative to $H$. 
\end{proof}

\begin{corollary}\label{Cor45}
Assume that $H$ is locally quasinormal and $X$ is H-SLT. If $X$ is h.H relative to $H_{\alpha}$, then $X$ is h.p.H relative to $H_{\alpha}$ for every $\alpha\in{P(X,x_{0})}$.
\end{corollary}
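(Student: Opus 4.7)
The plan is a direct reduction to Theorem \ref{Th44} by changing the basepoint from $x_{0}$ to $x:=\alpha(1)$ and replacing $H\leq\pi_{1}(X,x_{0})$ by its path-conjugate $H_{\alpha}\leq\pi_{1}(X,x)$. Nothing new needs to be proved about partitions, Spanier loops, or the quasinormality identity $H i_{\#}\pi_{1}(V,\cdot)=i_{\#}\pi_{1}(V,\cdot)H$: those were all handled inside the proof of Theorem \ref{Th44} with no essential use of $x_{0}$ as a distinguished point.

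The first step is to verify the two standing hypotheses of Theorem \ref{Th44} at the shifted basepoint $x$. Local quasinormality of $H_{\alpha}$ is exactly the conclusion of Lemma \ref{L1}, since $H$ is assumed locally quasinormal. For the SLT condition, I would unwind the definition: $X$ being $H$-SLT means that for \emph{every} $\delta\in P(X,x_{0})$ with $\delta(1)=y$, the space $X$ is $H_{\delta}$-SLT at $y$; specialising this to $\delta=\alpha$ and $y=x$ gives precisely that $X$ is $H_{\alpha}$-SLT at $x$, which is the hypothesis needed to invoke Theorem \ref{Th44} with $(X,x)$ and $H_{\alpha}$.

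With those two inputs in place, and with the standing assumption that $X$ is h.H relative to $H_{\alpha}$, a direct appeal to Theorem \ref{Th44} (read with basepoint $x$ instead of $x_{0}$) yields that $X$ is h.p.H relative to $H_{\alpha}$. Since this conclusion is claimed to hold \emph{for every} $\alpha\in P(X,x_{0})$, and the argument above used no feature of $\alpha$ beyond its endpoints, the result follows uniformly in $\alpha$.

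The only real point to be careful about — and I expect this to be the sole conceptual obstacle — is a bookkeeping one: the predicates ``h.H relative to $H_{\alpha}$'' and ``h.p.H relative to $H_{\alpha}$'' must be interpreted at the basepoint $x=\alpha(1)$, because $H_{\alpha}$ naturally lives in $\pi_{1}(X,x)$, not in $\pi_{1}(X,x_{0})$. Once this convention is recorded, the corollary is a one-line consequence of Lemma \ref{L1} together with Theorem \ref{Th44}, and no additional combinatorial argument is required.
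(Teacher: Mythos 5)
Your proposal is correct and follows exactly the paper's own route: invoke Lemma \ref{L1} to get local quasinormality of $H_{\alpha}$, unwind the definition of $H$-SLT to get that $X$ is $H_{\alpha}$-SLT at $x=\alpha(1)$, and then apply Theorem \ref{Th44} with the basepoint shifted to $x$. Your explicit remark that the predicates must be read at the basepoint $x$ is a useful clarification that the paper leaves implicit, but it does not change the argument.
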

\begin{proof}
By Remark \ref{L1}, $H_{\alpha}$ is locally quasinormal. However, by assumption, $X$ is $H_{\alpha}$-SLT at $x$. Accordingly, Theorem \ref{Th44} concludes that $X$ is h.p.H relative to $H_{\alpha}$.
\end{proof}

\begin{remark}
In view of Corollary \ref{Cor45}, the requirements ``H-SLT" and ``locally quasinormality of H" assure that for every $\alpha\in{P(X,x_{0})}$ the concepts of h.p.H relative to $H_{\alpha}$ and h.H relative to $H_{\alpha}$ are coincident. In case of H=1, we also have the coincidence of h.H and h.p.H when $X$ is a SLT space.
\end{remark}

\begin{theorem}\label{Th48}
Let $X$ be an H-SLT space. Then $X$ is h.H relative to $H_{\alpha}$ iff $H_{\alpha}$ is closed in $\pi_{1}^{wh}(X,\alpha(1))$ for every $\alpha\in{P(X,x_{0})}$.
\end{theorem}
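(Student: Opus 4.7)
The plan is to prove each direction separately for a fixed $\alpha\in P(X,x_{0})$, writing $x=\alpha(1)$, and to exploit repeatedly the identity that since $i_\#\pi_1(V,x)$ and $H_\alpha$ are each closed under inversion, the disjointness $i_\#\pi_1(V,x)\cap H_\alpha g=\emptyset$ is equivalent to $i_\#\pi_1(V,x)\cap g^{-1}H_\alpha=\emptyset$. This is what will bridge the right-coset language of the h.H definition with the left-translation form $N([g],V)=g\cdot i_\#\pi_1(V,x)$ of the whisker neighborhoods.

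For the forward direction, I will take $g\in\pi_{1}(X,x)\setminus H_\alpha$, pick a loop $\sigma$ at $x$ representing $g$, and regard $\sigma$ as a path in $P(X,x)$. Applying h.H relative to $H_\alpha$ at the path $\sigma$ with the element $g^{-1}\notin H_\alpha$ should produce an open $V\ni x$ with $[\sigma]\cdot i_\#\pi_1(V,x)\cdot[\bar\sigma]\cap H_\alpha g^{-1}=\emptyset$ in $\pi_{1}(X,x)$; right-multiplying by $g$ collapses this to $g\cdot i_\#\pi_1(V,x)\cap H_\alpha=\emptyset$, exactly the statement that a basic whisker neighborhood of $g$ misses $H_\alpha$. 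The trick is that conjugation by $[\sigma]$ converts the h.H conclusion into the desired left translation, and this half will not use the H-SLT hypothesis at all.

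For the converse, I will start with $\alpha'\in P(X,x)$, $\alpha'(1)=y$, and $g\notin H_\alpha$. Closedness of $H_\alpha$ at $g$ provides an open $W\ni x$ with $g\cdot i_\#\pi_1(W,x)\cap H_\alpha=\emptyset$, which after the inversion identity (and absorbing $H_\alpha$ on the left, which is harmless since $H_\alpha$ is a subgroup) reads $H_\alpha\cdot i_\#\pi_1(W,x)\cap H_\alpha g=\emptyset$. Now I invoke the H-SLT hypothesis, specialized via $\delta=\alpha$ in the definition so that $X$ is $H_\alpha$-SLT at $x$: this furnishes an open $V\ni y$ with $\alpha'\cdot i_\#\pi_1(V,y)\cdot\bar{\alpha'}\subseteq H_\alpha\cdot i_\#\pi_1(W,x)$, and intersecting with $H_\alpha g$ yields the empty set, which is precisely the h.H condition at the path $\alpha'$ and the element $g$. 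The main obstacle throughout is simply bookkeeping the left/right coset switch, since the whisker topology is phrased via left translations while h.H is phrased via right cosets; once inversion (in one direction) and the H-SLT transfer from the basepoint $x$ to an arbitrary endpoint $y$ (in the other) are handled, everything else reduces to direct substitution into the definitions.
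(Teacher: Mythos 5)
Your proposal is correct, but it is genuinely more self-contained than what the paper does: the paper's proof of Theorem \ref{Th48} consists of two citations --- \cite[Proposition 3.9]{Ab} for ``h.H relative to $H_{\alpha}$ implies $H_{\alpha}$ closed in $\pi_{1}^{wh}(X,x)$'' and \cite[Theorem 2.6]{SPasha} for the converse under the $H_{\alpha}$-SLT hypothesis --- whereas you reprove both implications directly from the definitions. Your coset bookkeeping checks out in both directions: in the forward direction, applying h.H at the loop $\sigma$ with the element $g^{-1}$ gives $g\,i_{\#}\pi_{1}(V,x)\,g^{-1}\cap H_{\alpha}g^{-1}=\emptyset$, and right-translating by $g$ yields $g\,i_{\#}\pi_{1}(V,x)\cap H_{\alpha}=\emptyset$, which is exactly a whisker neighborhood of $g$ missing $H_{\alpha}$ (and, as you note, this half needs no SLT hypothesis --- one could even use the constant path instead of $\sigma$, since $i_{\#}\pi_{1}(U,x)\cap H_{\alpha}g=\emptyset$ is equivalent, by taking inverses in the subgroup $i_{\#}\pi_{1}(U,x)$, to $g\,i_{\#}\pi_{1}(U,x)\cap H_{\alpha}=\emptyset$). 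In the converse, the passage from $g\,i_{\#}\pi_{1}(W,x)\cap H_{\alpha}=\emptyset$ to $H_{\alpha}\,i_{\#}\pi_{1}(W,x)\cap H_{\alpha}g=\emptyset$ is legitimate (if $hw=h'g$ then $w\in H_{\alpha}g$), and the specialization $\delta=\alpha$ in the H-SLT definition does give that $X$ is $H_{\alpha}$-SLT at $x$, so the inclusion $i_{\#}\pi_{1}(V,y)_{\bar{\alpha'}}\subseteq H_{\alpha}\,i_{\#}\pi_{1}(W,x)$ delivers the h.H condition at $\alpha'$ and $g$. The net effect is that your argument buys a proof readable without consulting \cite{Ab} and \cite{SPasha}, at the cost of redoing work the paper delegates; mathematically the content is the same as the cited lemmas.
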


\begin{proof}
``Only If'': Take $\alpha\in{P(X,x_{0})}$ with $\alpha(1)=x$. It is shown in \cite[Proposition 3.9]{Ab} that the property of being h.H relative to $H_{\alpha}$ implies that $H_{\alpha}$ is closed in $\pi_{1}^{wh}(X,x)$. 

``If'': By assumption, $X$ is $H_{\alpha}$-SLT at $x$. From Theorem 2.6 of \cite{SPasha}, if $H_{\alpha}$ is closed in $\pi_{1}^{wh}(X,x)$, then $X$ is h.H relative to $H_{\alpha}$. 
\end{proof}

The normality of H is used in some results of \cite{SPasha}, e.g. Corollaries 2.8 and 2.9. These results can be improved as follows.

\begin{corollary}
Suppose $X$ is a locally path connected H-SLT space and H is locally quasinormal. Then $H_{\alpha}$ is closed in $\pi_{1}^{qtop}(X,x)$ iff $H_{\alpha}$ is closed in $\pi_{1}^{wh}(X,x)$.
\end{corollary}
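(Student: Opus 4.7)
The plan is to chain together the three equivalences already proven in the paper, using $H_{\alpha}$ as the subgroup instead of $H$, and to rely on Lemma \ref{L1} (together with the $H$-SLT hypothesis) so that the hypotheses on $H$ and $X$ transfer correctly to $H_{\alpha}$ at the base point $x=\alpha(1)$. The underlying logical scheme is
\[
H_{\alpha}\text{ closed in }\pi_{1}^{qtop}(X,x)\;\Longleftrightarrow\; X\text{ is h.p.H rel.\ }H_{\alpha}\;\Longleftrightarrow\; X\text{ is h.H rel.\ }H_{\alpha}\;\Longleftrightarrow\; H_{\alpha}\text{ closed in }\pi_{1}^{wh}(X,x).
\]
The outer two equivalences are supplied by Theorems \ref{Th29} and \ref{Th48} respectively; the middle equivalence is the content of Theorem \ref{Th44}/Corollary \ref{Cor45}, whose hypotheses must be checked for $H_{\alpha}$.

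First I would argue the forward direction. Suppose $H_{\alpha}$ is closed in $\pi_{1}^{qtop}(X,x)$. Since $X$ is path-connected and locally path connected, it is Peano; the first half of Theorem \ref{Th29} (applied at the base point $x$) then gives that $X$ is h.p.H relative to $H_{\alpha}$. Every h.p.H statement trivially implies the corresponding h.H statement (this is immediate from Definitions \ref{Dh} and \ref{Dhp}), so $X$ is h.H relative to $H_{\alpha}$. Because $X$ is $H$-SLT, Theorem \ref{Th48} applies and yields that $H_{\alpha}$ is closed in $\pi_{1}^{wh}(X,x)$.

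For the converse, suppose $H_{\alpha}$ is closed in $\pi_{1}^{wh}(X,x)$. Theorem \ref{Th48} again gives that $X$ is h.H relative to $H_{\alpha}$. To promote this to h.p.H I would invoke Corollary \ref{Cor45}: by Lemma \ref{L1}, $H_{\alpha}$ is locally quasinormal whenever $H$ is, and by the $H$-SLT hypothesis $X$ is $H_{\alpha}$-SLT at $x$; hence the corollary applies and $X$ is h.p.H relative to $H_{\alpha}$. Finally, the second half of Theorem \ref{Th29}, using that $X$ is Peano, transports this back to the statement that $H_{\alpha}$ is closed in $\pi_{1}^{qtop}(X,x)$, closing the loop.

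The only non-mechanical step is the middle one in the converse direction, where I must be certain that the machinery of Corollary \ref{Cor45} activates at the shifted base point $x=\alpha(1)$. The local quasinormality passes from $H$ to $H_{\alpha}$ via Lemma \ref{L1} without fuss, so the main thing to verify cleanly is that the $H$-SLT property at $x_{0}$ automatically gives the $H_{\alpha}$-SLT property at $x$ needed to invoke Theorem \ref{Th44} through Corollary \ref{Cor45}; this is precisely what the definition of $H$-SLT space (not just $H$-SLT \emph{at} $x_{0}$) guarantees. Once that bookkeeping is in place, no further computation is required.
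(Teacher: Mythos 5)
Your proof is correct and the substantive direction (closed in $\pi_{1}^{wh}(X,x)$ implies closed in $\pi_{1}^{qtop}(X,x)$) is exactly the paper's chain: Theorem \ref{Th48} gives h.H relative to $H_{\alpha}$, Corollary \ref{Cor45} upgrades this to h.p.H, and Theorem \ref{Th29} closes $H_{\alpha}$ in $\pi_{1}^{qtop}(X,x)$. The only divergence is the forward direction, which the paper dismisses as not requiring proof (on a locally path connected space the whisker topology refines the quotient topology, so qtop-closed implies wh-closed), whereas you route it through Theorem \ref{Th29} and the standard implication that h.p.H relative to $H_{\alpha}$ yields h.H relative to $H_{\alpha}$; both are valid.
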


\begin{proof}
Only the sufficiency requires proof. From Theorem \ref{Th48}, $X$ is h.H relative to $H_{\alpha}$ and Corollary \ref{Cor45} implies that $X$ is h.p.H relative to $H_{\alpha}$. Therefore, from Theorem \ref{Th29}, $H_{\alpha}$ is closed in $\pi_{1}^{qtop}(X,x)$.
\end{proof}

The usefulness of the endpoint projection maps can be seen in Lemma 5.10 of \cite{Braz}. However, recall that the unique lifting property of $p_{H}$ results from its unique path lifting property \cite[Lemma 5.9]{Braz}. Indeed, a map $p:\widetilde{X}\rightarrow X$ with $p(\tilde{x}_{0})=x_{0}$ and $H=p_{\#}\pi_{1}(\widetilde{X}, \tilde{x}_{0})$ is a generalized covering map if $p_{H}$ has the unique path lifting property. In \cite{BrazO}, it has been discovered that specified subgroups of fundamental group make the endpoint projection map becomes unique path lifting, e.g. closed subgroups of $\pi_{1}^{qtop}(X,x_{0})$. The following corollary demonstrates that Corollary 2.9 of \cite{SPasha} holds for locally quasinormal subgroups.

\begin{corollary}
Suppose that  $\alpha\in{P(X,x_{0})}$, $X$ is a locally path connected H-SLT, and H is locally quasinormal subgroup. Then $H_{\alpha}$ is closed in $\pi_{1}^{qtop}(X,\alpha(1))$ iff $p_{H_{\alpha}}$ has the unique path lifting property.
\end{corollary}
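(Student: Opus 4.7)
The plan is to assemble this as a short chain of implications using the results already proved or cited in the excerpt. Note first that $X$ is Peano (path connected by the standing assumption, and locally path connected by hypothesis), which is the ambient regularity needed for Theorems \ref{Th28} and \ref{Th29}.

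For the forward direction, I would give essentially a one-line argument: since $H_{\alpha}$ is assumed closed in $\pi_{1}^{qtop}(X,\alpha(1))$ and $X$ is Peano, Theorem \ref{Th28} applied with basepoint $\alpha(1)$ and subgroup $H_{\alpha}$ immediately yields that $p_{H_{\alpha}}$ has the unique path lifting property.

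For the converse, the plan is to build up from unique path lifting to closedness through four steps. First, by Lemma~5.9 of \cite{Braz} (quoted in the paragraph preceding the corollary), the unique path lifting property of $p_{H_{\alpha}}$ promotes to the unique lifting property, so $p_{H_{\alpha}}$ is a generalized covering map associated to $H_{\alpha}$. Second, the first bullet of Proposition \ref{Pr28} then gives that $X$ is h.H relative to $H_{\alpha}$. Third, to move from h.H to h.p.H, I would invoke Corollary \ref{Cor45}: its two hypotheses both hold, namely $H_{\alpha}$ is locally quasinormal by Lemma \ref{L1} (applied to the given $\alpha$), and $X$ being H-SLT means by definition that $X$ is $H_{\alpha}$-SLT at $\alpha(1)$. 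Therefore $X$ is h.p.H relative to $H_{\alpha}$. Fourth, the second half of Theorem \ref{Th29} (using that $X$ is Peano) concludes that $H_{\alpha}$ is closed in $\pi_{1}^{qtop}(X,\alpha(1))$.

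I do not anticipate any real obstacle; every link in the chain is already available in the excerpt, and the locally quasinormal hypothesis on $H$ is used only to feed Corollary \ref{Cor45}, which is the one step where h.H must be upgraded to h.p.H. The one point worth stating carefully is the transfer of the H-SLT and locally quasinormal conditions from $H$ to the path conjugate $H_{\alpha}$ at the new basepoint $\alpha(1)$, which is exactly what Lemma \ref{L1} and the definition of H-SLT provide.
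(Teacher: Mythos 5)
Your proof is correct and follows essentially the same chain as the paper's: Theorem \ref{Th28} for the forward direction, and for the converse, unique path lifting $\Rightarrow$ generalized covering $\Rightarrow$ h.H relative to $H_{\alpha}$ (Proposition \ref{Pr28}) $\Rightarrow$ h.p.H $\Rightarrow$ closed in $\pi_{1}^{qtop}(X,\alpha(1))$ (Theorem \ref{Th29}). In fact you cite the middle step more accurately than the paper does: the paper invokes Theorem \ref{Th48} to pass from h.H to h.p.H, but that theorem concerns closedness in the whisker topology, and the result actually needed is Corollary \ref{Cor45} (resting on Theorem \ref{Th44} and Lemma \ref{L1}), exactly as you use it.
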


\begin{proof}
``Only if'': It is immediate from Theorem \ref{Th28}.

``If'': By Proposition \ref{Pr28}, $X$ is h.H relative to $H_{\alpha}$. Since $X$ is H-SLT, Theorem \ref{Th48} concludes that $X$ is h.p.H relative to $H_{\alpha}$. Therefore, Theorem \ref{Th29} implies that $H_{\alpha}$ is closed in $\pi_{1}^{qtop}(X,x)$.
\end{proof}

The connection between whisker and lasso topologies on homotopy class of paths, $\widetilde{X}$, has been introduced by Virk and Zastrow for the first time \cite{VZcom}. After, in similar fashion, the authors in \cite{SSPasha} not only clarified  the connection between $\widetilde{X}^{wh}_{H}$ and $\widetilde{X}^{l}_{H}$ but also characterized conditions for which they become coincident \cite[Theorem 4.2]{SSPasha}; they are not necessarily identical \cite{VZcom}. One of these conditions is the normality of H. We show that this coincidence holds for locally quasinormal subgroups. 

\begin{theorem}\label{Th1}
Let H be locally quasinormal. Then $X$ is a strong H-SLT space iff $\widetilde{X}^{l}_{H_{\alpha}}=\widetilde{X}^{wh}_{H_{\alpha}}$ for each path $\alpha\in{P(X,x_{0})}$.
\end{theorem}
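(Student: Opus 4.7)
My strategy is to compare basic neighborhoods at a point $[\beta]_{H_\alpha}\in\widetilde X_{H_\alpha}$ in both directions. Since every whisker basic $N_{H_\alpha}([\beta]_{H_\alpha},W)$ is contained in any lasso basic $N_{H_\alpha}([\beta]_{H_\alpha},\mathcal U,W)$ with $W\in\mathcal U$ (take the trivial Spanier factor), the whisker topology is automatically finer than the lasso topology. The nontrivial content in each direction is therefore to produce, for a given whisker basic, a lasso basic sitting inside it.

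For the forward implication, fix $\alpha\in P(X,x_{0})$, a path $\beta\in P(X,\alpha(1))$, and an open $W\ni b$ with $b:=\beta(1)$. The conjugation identity $H_{\alpha\ast\beta}=(H_\alpha)_\beta$ together with the strong $H$-SLT hypothesis yields that $X$ is strong $(H_\alpha)_\beta$-SLT at $b$; applied to $W$ this furnishes, for each $y\in X$, an open $U_y\ni y$ with
\[ i_{\#}\pi_{1}(U_y,y)_{\bar\tau}\subseteq (H_\alpha)_\beta\cdot i_{\#}\pi_{1}(W,b) \]
for every $\tau\in P(X,b)$ with $\tau(1)=y$. Assemble these into an open cover $\mathcal U=\{U_y:y\in X\}$, refining so that $W\in\mathcal U$. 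Applying Lemma~\ref{L1} twice shows that $(H_\alpha)_\beta$ is locally quasinormal, and Remark~\ref{Re} then gives
\[ (H_\alpha)_\beta\cdot i_{\#}\pi_{1}(W,b)=i_{\#}\pi_{1}(W,b)\cdot (H_\alpha)_\beta, \]
so this product set is in fact a subgroup of $\pi_{1}(X,b)$. Every generator of $\pi(\mathcal U,b)$ lies in this subgroup by construction, hence any $[\sigma]\in\pi(\mathcal U,b)$ admits a single-factor decomposition $[\sigma]=[\bar\beta\ast h\ast\beta\ast w]$ with $[h]\in H_\alpha$ and $[w]\in i_{\#}\pi_{1}(W,b)$. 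Given $[\gamma]_{H_\alpha}\in N_{H_\alpha}([\beta]_{H_\alpha},\mathcal U,W)$ with $\gamma\simeq\beta\ast\sigma\ast\delta$ and $\delta$ inside $W$, one then computes $\beta\ast\sigma\ast\delta\simeq h\ast\beta\ast(w\ast\delta)$; since $w\ast\delta$ lies inside $W$ and starts at $b$, this gives $[\gamma]_{H_\alpha}=[\beta\ast(w\ast\delta)]_{H_\alpha}\in N_{H_\alpha}([\beta]_{H_\alpha},W)$, as required.

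For the converse, fix $\alpha\in P(X,x_{0})$ with $\alpha(1)=x$, and, given $y\in X$ and open $W\ni x$, use the hypothesis to pick a lasso basic $N_{H_\alpha}([c_{x}]_{H_\alpha},\mathcal U,V_{0})\subseteq N_{H_\alpha}([c_{x}]_{H_\alpha},W)$ at the base point. Choose $V:=U_y\in\mathcal U$ with $y\in V$. For any $\mu\in P(X,x)$ with $\mu(1)=y$ and any $[\lambda]\in\pi_{1}(V,y)$, the class $[\mu\ast\lambda\ast\bar\mu]$ is a generator of $\pi(\mathcal U,x)$, so $[\mu\ast\lambda\ast\bar\mu]_{H_\alpha}$ lies in the lasso basic (taking $c_{x}$ as the $V_{0}$-piece), and therefore in the whisker basic. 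Unpacking the latter yields $[\mu\ast\lambda\ast\bar\mu]\in H_\alpha\cdot i_{\#}\pi_{1}(W,x)$; since $V$ depends only on $y$ (and on $W$, $\alpha$) and not on $\mu$, this is precisely strong $H_\alpha$-SLT at $x$, and hence strong $H$-SLT of $X$ since $\alpha$ was arbitrary.

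The main obstacle is the algebraic step in the forward direction: without the locally quasinormal hypothesis one can only place $[\sigma]$ in the subgroup \emph{generated by} $(H_\alpha)_\beta\cup i_{\#}\pi_{1}(W,b)$, and a generic word in these two sets does not collapse to a single ``$H_\alpha$-element times $W$-loop''. Lemma~\ref{L1} and Remark~\ref{Re} are exactly what promote the product set to a subgroup and make the one-step factorization $[\sigma]=[\bar\beta\ast h\ast\beta\ast w]$ available, which is the pivot of the whole argument.
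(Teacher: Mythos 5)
Your overall route is the same as the paper's: in the forward direction you transfer each Spanier generator of $\pi(\mathcal{U},b)$ into the product set $(H_{\alpha})_{\beta}\cdot i_{\#}\pi_{1}(\,\cdot\,,b)$ via the strong SLT property, use quasinormality to promote that product set to a subgroup so that a word of generators collapses to a single factor $[\bar{\beta}\ast h\ast\beta\ast w]$, and then absorb $h$ into the $H_{\alpha}$-coset to land back in the whisker basic; the converse is the standard specialization of a lasso basic at the basepoint (which the paper simply defers to Theorem 4.2 of \cite{SSPasha}, so your explicit argument there is fine).

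There is, however, one concrete misstep at exactly the point you call the pivot. You invoke Remark \ref{Re} to assert
$(H_{\alpha})_{\beta}\cdot i_{\#}\pi_{1}(W,b)=i_{\#}\pi_{1}(W,b)\cdot(H_{\alpha})_{\beta}$
for the \emph{given} open set $W$ of the whisker basic. Local quasinormality (Definition \ref{De1}) does not provide this: it only guarantees that inside each open $U\ni b$ there is \emph{some smaller} $V\subseteq U$ with $H\pi(\alpha,V)=\pi(\alpha,V)H$, and Remark \ref{Re} is a statement about that particular $V$, not about an arbitrary neighborhood. Without the commutation, $(H_{\alpha})_{\beta}\cdot i_{\#}\pi_{1}(W,b)$ need not be a subgroup, and your generators only place $[\sigma]$ in the subgroup it generates, where a generic word $h_{1}w_{1}h_{2}w_{2}\cdots$ does not collapse to a single factor --- the very failure you flag. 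The repair is short and is precisely what the paper does: first shrink $W$ to a $V\subseteq W$ on which the commutation holds, observe $N_{H_{\alpha}}([\beta]_{H_{\alpha}},V)\subseteq N_{H_{\alpha}}([\beta]_{H_{\alpha}},W)$, and then run your argument with $V$ in place of $W$ throughout (applying the strong SLT hypothesis to $V$, and putting $V$ rather than $W$ into the cover $\mathcal{U}$). With that adjustment your proof goes through and coincides with the paper's.
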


\begin{proof}
``Only if'': From the definitions of whisker and lasso topologies, it is evident that $\widetilde{X}^{l}_{N}$ is coarser than $\widetilde{X}^{wh}_{N}$ for each subgroup $N$ of fundamental group. At first, it will be shown that  $\widetilde{X}^{wh}_{H}$ is coarser than $\widetilde{X}^{l}_{H}$. To do this, we take an open subset $ ([\alpha]_{H},U)$ of $\widetilde{X}^{wh}_{H}$. The hypothesis of locally quasinormality of H assures the existence of an open subset $V\subseteq U$ of $\alpha(1)=x$ such that $H\pi(\alpha, V)=\pi(\alpha,V)H$. Clearly, We have $ ([\alpha]_{H},V)\subseteq([\alpha]_{H},U)$. So, since $X$ is strong H-SLT, for every point $y\in{X}$ there is an open subset $W$ at $y$ such that for every path $\delta$ from $x$ to $y$, for every closed path $\beta:I\rightarrow W$ at $y$ there is a closed path $\lambda:I\rightarrow V$ at $x$ such that $[\delta \ast \beta \ast \bar{\delta}]_{H_{\alpha}}=[\lambda]_{H_{\alpha}}$. Assume that $\mathcal{W}$ is an open cover of $X$ consisting of $W$'s. Define open basis neighborhood $([\alpha]_{H}, \mathcal{W}, V)$ in $\widetilde{X}^{l}_{H}$. Consider $[\alpha\ast l \ast \lambda]_{H}\in{([\alpha]_{H}, \mathcal{W}, V)}$, where $[l]\in{\pi(\mathcal{W}, \alpha(1))}$ and $\lambda:I\rightarrow V$ is a path with $\lambda(0)=\alpha(1)=x$. We know that $l=\Pi_{i=1}^{n} \alpha_{i}\ast \beta_{i}\ast \bar{\alpha}_{i}$, where $\alpha_{i}$'s are paths with $\alpha_{i}(1)=x$ and $\beta_{i}$'s are loops at $\alpha_{i}(1)$ in some $W\in{\mathcal{W}}$. Put $\theta_{i}=\alpha_{i}\ast \beta_{i}\ast \bar{\alpha}_{i}$ for $i=1,2,...,n$. Since $X$ is strong H-SLT, $[\theta_{i}]\in{H_{\alpha}i_{\#}\pi_{1}(V,x)}$ for $i=1,2,...,n$. So, we have $[l]=[\theta_{1}\ast \theta_{2} \ast...\ast \theta_{n}]\in{(H_{\alpha}i_{\#}\pi_{1}(V,x))(H_{\alpha}i_{\#}\pi_{1}(V,x))...(H_{\alpha}i_{\#}\pi_{1}(V,x))}$. By Remark \ref{Re}, $(H_{\alpha}i_{\#}\pi_{1}(V,x))(H_{\alpha}i_{\#}\pi_{1}(V,x))...(H_{\alpha}i_{\#}\pi_{1}(V,x))=H_{\alpha}i_{\#}\pi_{1}(V,x)$ and therefore $[l]\in{H_{\alpha}i_{\#}\pi_{1}(V,x)}$. In other words, there exists a loop $\theta$ in $V$ at $x$ so that $[l]\in{H_{\alpha}[\theta]}$, i.e., $[l \ast \bar{\theta}]\in{H_{\alpha}}$. Write $[l\ast \bar{\theta}]=[l\ast \lambda \ast \bar{\lambda} \ast \bar{\theta}]$. So we have $[l\ast \lambda \ast \bar{\lambda} \ast \bar{\theta}]\in{[\bar{\alpha}H\alpha]}$ or equivalently, $[\alpha \ast l \ast \lambda \ast \bar{\lambda} \ast \bar{\theta} \ast \bar{\alpha}]\in{H}$. This means that $[\alpha\ast l \ast \lambda]_{H}=[\alpha \ast \theta \ast \lambda]_{H}\in{([\alpha]_{H},V)} \subseteq([\alpha]_{H},U)$. Therefore, $([\alpha]_{H}, \mathcal{W}, V) \subseteq([\alpha]_{H},U)$ shows that $\widetilde{X}^{l}_{H}$  is finer than $\widetilde{X}^{wh}_{H}$. So, $\widetilde{X}^{l}_{H}=\widetilde{X}^{wh}_{H}$. Note that we can easily derive that for every path $\alpha\in{P(X,x_{0})}$ the space $X$ is a strong $H_{\alpha}$-SLT space. Accordingly, Lemma  \ref{L1} and the above statements follow that $\widetilde{X}^{wh}_{H_{\alpha}}=\widetilde{X}^{l}_{H_{\alpha}}$ for every path $\alpha\in{P(X,x_{0})}$.

``If'': The proof is analogous to the proof of \cite[Theorem 4.2]{SSPasha}.


\end{proof}

The corollary below is an extended version of Corollary 4.3 of \cite{SSPasha}. 
\begin{corollary}
Suppose H is locally quasinormal and $\alpha \in{P(X,x_{0})}$ with $\alpha(1)=x$. If $X$ is strong H-SLT, then $ (p^{-1}_{H_{\alpha}}(x))^{wh}=(p^{-1}_{H_{\alpha}}(x))^{l} $.
\end{corollary}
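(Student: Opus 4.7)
The plan is to derive this corollary directly from Theorem \ref{Th1}. The hypotheses on $H$ (locally quasinormal) and on $X$ (strong $H$-SLT) are precisely those in the ``only if'' direction of that theorem, so it applies and gives the equality of topologies $\widetilde{X}^{wh}_{H_{\alpha}} = \widetilde{X}^{l}_{H_{\alpha}}$ on the common underlying set for every $\alpha \in P(X,x_{0})$.

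With this in hand, I would observe that the fiber $p^{-1}_{H_{\alpha}}(x)$ is simply the subset of $\widetilde{X}_{H_{\alpha}}$ consisting of those classes $[\beta]_{H_{\alpha}}$ with $\beta(1) = x$, and the two topologies in the statement of the corollary are precisely the subspace topologies inherited from $\widetilde{X}^{wh}_{H_{\alpha}}$ and $\widetilde{X}^{l}_{H_{\alpha}}$ respectively. Since the subspace topology on a fixed subset is determined by the ambient topology, equal ambient topologies induce equal subspace topologies. Thus $(p^{-1}_{H_{\alpha}}(x))^{wh} = (p^{-1}_{H_{\alpha}}(x))^{l}$ follows by pure restriction.

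There is essentially no obstacle to overcome at this stage; all of the substantive work, in particular the careful interplay between local quasinormality (via Remark \ref{Re}) and the strong $H$-SLT property used to rewrite an element of a lasso basic neighborhood as a product $[\theta_1 \ast \cdots \ast \theta_n] \in H_{\alpha} i_{\#}\pi_1(V,x)$, has already been done inside the proof of Theorem \ref{Th1}. The corollary is a formal consequence, and its only content beyond Theorem \ref{Th1} is the observation that the fiber inherits the coincidence from the whole space.
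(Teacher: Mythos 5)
Your proposal is correct and matches the paper's proof, which simply cites Theorem \ref{Th1}: the equality $\widetilde{X}^{wh}_{H_{\alpha}}=\widetilde{X}^{l}_{H_{\alpha}}$ passes to the subspace topologies on the fiber $p^{-1}_{H_{\alpha}}(x)$. Your added remark that the fiber carries the respective subspace topologies is the only (trivial) content beyond the theorem, exactly as the paper intends.
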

\begin{proof}
It follows immediately from Theorem \ref{Th1}.
\end{proof}


The intersection of all Spanier subgroups of $\pi_{1}(X,x_{0})$, denoted by $\pi_{1}^{sp}(X,x_{0})$, is called Spanier group \cite[Definition 2.3]{FZ}. However, the set of all homotopy classes of small loops of $\pi_{1}(X,x_{0})$ forms a subgroup  which is denoted by $\pi_{1}^{s}(X,x_{0})$ \cite[Definition 1]{V}; note that a loop $\alpha\in{\Omega(X,x_{0})}$ is called small iff it has a homotopy representative in each open subset $U$ of $x_{0}$. The usefulness of these subgroups and other important subgroups can be observed in \cite{FZ, V}. In the following. it is investigated the equality of these two subgroups. Of course, recall that we have the relation $\pi_{1}^{s}(X,x_{0})\leq\pi_{1}^{sp}(X,x_{0})$.

\begin{proposition}\label{Pro1}
Let $\pi_{1}^{s}(X,x_{0})$ contains locally quasinormal subgroup H. Then $\pi_{1}^{s}(X,x_{0})=\pi_{1}^{sp}(X,x_{0})$ if $X$ is strong H-SLT at $x_{0}$.
\end{proposition}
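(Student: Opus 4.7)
The target inclusion is $\pi_{1}^{sp}(X,x_{0}) \leq \pi_{1}^{s}(X,x_{0})$, since the reverse containment is already stated. The plan is to fix $[\gamma] \in \pi_{1}^{sp}(X,x_{0})$ together with an arbitrary open neighborhood $U$ of $x_{0}$ and produce a loop based at $x_{0}$ whose image lies in $U$ and which represents $[\gamma]$; this is exactly what it means for $[\gamma]$ to be small.

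First I would apply local quasinormality of $H$ with the constant path $c_{x_{0}}$ to shrink $U$ to an open $V \subseteq U$ at $x_{0}$ satisfying $H \cdot i_{\#}\pi_{1}(V,x_{0}) = i_{\#}\pi_{1}(V,x_{0}) \cdot H$ (this is Remark \ref{Re} in the case $\alpha = c_{x_{0}}$). Next, strong $H$-SLT at $x_{0}$ applied to this $V$ yields, for each $y \in X$, an open neighborhood $W_{y}$ of $y$ such that $i_{\#}\pi_{1}(W_{y},y)_{\bar{\alpha}} \subseteq H \cdot i_{\#}\pi_{1}(V,x_{0})$ for every $\alpha \in P(X,x_{0})$ with $\alpha(1)=y$. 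The family $\mathcal{W} = \{W_{y}\}_{y \in X}$ is an open cover of $X$, so $\pi_{1}^{sp}(X,x_{0}) \leq \pi(\mathcal{W},x_{0})$ forces
\[ [\gamma] = \prod_{i=1}^{n}[\alpha_{i} \ast \beta_{i} \ast \bar{\alpha}_{i}], \]
where each $\beta_{i}$ is a loop contained in some element of $\mathcal{W}$.

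By strong $H$-SLT each factor satisfies $[\alpha_{i} \ast \beta_{i} \ast \bar{\alpha}_{i}] \in H \cdot i_{\#}\pi_{1}(V,x_{0})$, and the quasinormal relation $H \cdot i_{\#}\pi_{1}(V,x_{0}) = i_{\#}\pi_{1}(V,x_{0}) \cdot H$ then collapses $(H \cdot i_{\#}\pi_{1}(V,x_{0}))^{n}$ down to $H \cdot i_{\#}\pi_{1}(V,x_{0})$ in the same telescoping manner as in Theorem \ref{Th1}. Therefore $[\gamma] = [h][\lambda]$ for some $[h] \in H$ and some loop $\lambda$ in $V$ at $x_{0}$. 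Since $H \leq \pi_{1}^{s}(X,x_{0})$, $[h]$ itself is a small loop and admits a representative $h'$ whose image lies in $V$; then $h' \ast \lambda$ is a loop contained in $V \subseteq U$ that represents $[\gamma]$, giving $[\gamma] \in \pi_{1}^{s}(X,x_{0})$.

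The main obstacle I expect is the basepoint alignment in the previous paragraph: a Spanier generator for $\mathcal{W}$ has $\beta_{i}$ a loop at $\alpha_{i}(1)$ contained in some $W_{y}$, but $\alpha_{i}(1)$ need not equal $y$, whereas strong $H$-SLT is stated for loops based at the distinguished point of $W_{y}$. I would handle this exactly as in the proof of Theorem \ref{Th1}: choose a path $\sigma$ inside $W_{y}$ from $\alpha_{i}(1)$ to $y$ and rewrite $[\alpha_{i} \ast \beta_{i} \ast \bar{\alpha}_{i}]$ as $[(\alpha_{i}\ast\sigma) \ast (\bar{\sigma}\ast\beta_{i}\ast\sigma) \ast (\overline{\alpha_{i}\ast\sigma})]$, so that strong $H$-SLT applies on the nose. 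Everything else is straightforward bookkeeping.
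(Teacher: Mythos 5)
Your proposal is correct and follows essentially the same route as the paper's proof: shrink $U$ to $V$ via local quasinormality, build the cover from the strong $H$-SLT neighborhoods, push each Spanier generator into $H\, i_{\#}\pi_{1}(V,x_{0})$, collapse the product using $H\, i_{\#}\pi_{1}(V,x_{0})=i_{\#}\pi_{1}(V,x_{0})\,H$, and finish with $H\leq\pi_{1}^{s}(X,x_{0})$. Your remark on realigning the basepoint of $\beta_{i}$ with the distinguished point of $W_{y}$ addresses a detail the paper passes over silently, but it does not change the argument.
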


\begin{proof}
Assume that $[\theta]\in{\pi_{1}^{sp}(X,x_{0})}$ and $U$ is an open subset in $X$ containing $x_{0}$. By Remark \ref{Re}, we have an open subset $x_{0}\in{V}\subseteq U$ such that $i_{\#}\pi_{1}(V,x_{0})H=Hi_{\#}\pi_{1}(V,x_{0})$. Since $X$ is strong H-SLT at $x_{0}$, we can define an open cover $\mathcal{U}$ of $X$ such that $\pi_{1}^{sp}(X,x_{0})\subseteq \pi(\mathcal{U},x_{0})$ and $[\theta]\in{\pi(\mathcal{U},x_{0})}$. We know that $\theta=\Pi _{i=1}^{n} \gamma_{i}$, where $\gamma_{i}=\alpha_{i}\ast \beta_{i}\ast \bar{\alpha}_{i}$ for $i=1,2,...,n$, $\alpha_{i}$'s are paths from $x_{0}$ to $\alpha_{i}(1)$, and $\beta_{i}$'s are closed paths in some $U_{i}\in{\mathcal{U}}$ at $\alpha_{i}(1)$. Hence, there is a closed path $\lambda_{i}$ in $V$ at $x_{0}$ such that $[\gamma_{i}\ast \bar{\lambda_{i}}]\in{H}$, that is, $[\gamma_{i}]\in{H[\lambda_{i}]}$. Thus, $[\theta]=[\gamma_{1} \ast \gamma_{2} \ast ...\ast \gamma_{n}]\in{(H[\lambda_{1}])(H[\lambda_{2}])...(H[\lambda_{n}])}$. By the relation $i_{\#}\pi_{1}(V,x_{0})H=Hi_{\#}\pi_{1}(V,x_{0})$, there is a closed path $\gamma:I\rightarrow V$ at $x_{0}$ such that $[\theta]\in{H[\gamma]}$, i.e., $[\theta]=[h\ast \gamma]=[h][\gamma]$. Since $H\subseteq \pi_{1}^{s}(X,x_{0})$, so $[h]\in{i_{\#}\pi_{1}(V,x_{0})}$. Therefore, $[\theta]\in{i_{\#}\pi_{1}(V,x_{0})}$ concludes that $\theta$ is a small loop and accordingly, $\pi_{1}^{s}(X,x_{0})=\pi_{1}^{sp}(X,x_{0})$.
\end{proof}

\begin{corollary}
If $X$ is strong SLT at $x_{0}$, $\pi_{1}^{s}(X,x_{0})=\pi_{1}^{sp}(X,x_{0})$.
\end{corollary}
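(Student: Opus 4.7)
The plan is to derive this immediately from Proposition \ref{Pro1} by specializing to the trivial subgroup. Recall the convention established in the definitions of H-SLT and strong H-SLT: when $H$ is the trivial subgroup $\{1\}$, the prefix ``H-'' is dropped, so ``strong SLT at $x_{0}$'' is exactly ``strong $\{1\}$-SLT at $x_{0}$''. Thus the SLT hypothesis of the corollary matches the SLT hypothesis of the proposition with $H = \{1\}$.

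It remains to verify the two subgroup hypotheses of Proposition \ref{Pro1} for $H = \{1\}$. First, trivially $\{1\} \leq \pi_{1}^{s}(X,x_{0})$, so the containment $H \subseteq \pi_{1}^{s}(X,x_{0})$ holds. Second, I need $\{1\}$ to be locally quasinormal in the sense of Definition \ref{De1}: for every $\alpha \in P(X,x_{0})$ with $\alpha(1)=x$ and every open $U \ni x$, there must exist an open $V \subseteq U$ with $\{1\}\,\pi(\alpha,V) = \pi(\alpha,V)\,\{1\}$. Taking $V = U$ (or any open neighborhood $V \subseteq U$ of $x$), both sides reduce to $\pi(\alpha,V)$ itself, so the equality is automatic. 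Hence $\{1\}$ is locally quasinormal.

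With both hypotheses of Proposition \ref{Pro1} verified for $H = \{1\}$, the conclusion $\pi_{1}^{s}(X,x_{0}) = \pi_{1}^{sp}(X,x_{0})$ follows directly. There is no genuine obstacle here; the entire content of the corollary is a substitution of $H = \{1\}$ into the already-proved proposition, and the proof should be a one-line appeal to that result after noting the trivial verification of the two subgroup conditions.
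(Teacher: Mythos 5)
Your proposal is correct and matches the paper's proof, which simply cites Proposition \ref{Pro1}; you have merely made explicit the intended specialization to $H=\{1\}$, including the trivial verifications that $\{1\}\leq\pi_{1}^{s}(X,x_{0})$ and that the trivial subgroup is locally quasinormal.
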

\begin{proof}
It follows immediately from Proposition \ref{Pro1}.
\end{proof}

In the following example, we give a locally quasinormal subgroup which is not normal.

\begin{example}\label{EX}
As we know, Spanier subgroup $\pi(\mathcal{U},x_{0})$ and path Spanier subgroup $\widetilde{\pi}(\mathcal{V},x_{0})$ are equal if $\widetilde{\pi}(\mathcal{V},x_{0})$ is normal \cite{TorS}. Therefore, path Spanier subgroup is not necessarily normal. On the other hand, the form of elements of $\widetilde{\pi}(\mathcal{V},x_{0})$ and $\pi(\alpha, V_{\alpha})$ for each path $\alpha\in{P(X,x_{0})}$ and for every open subset $V_{\alpha}\in{\mathcal{V}}$ of $\alpha(1)$ implies that $\pi(\alpha,V_{\alpha})\widetilde{\pi}(\mathcal{V},x_{0})=\widetilde{\pi}(\mathcal{V},x_{0})\pi(\alpha,V_{\alpha})$. Accordingly, $\widetilde{\pi}(\mathcal{V},x_{0})$ is locally quasinormal.
\end{example}



\section*{Reference}

\bibliography{mybibfile}




\end{document}